\documentclass{proc-l}

\usepackage{xcolor}

\newtheorem{thm}{Theorem}[section]
\newtheorem{lem}[thm]{Lemma}
\newtheorem{cor}[thm]{Corollary}
\newtheorem{pro}[thm]{Proposition}
\theoremstyle{definition}

\theoremstyle{remark}
\newtheorem{remark}[thm]{Remark}

\numberwithin{equation}{section}

\begin{document}

\title{Generic properties of discrete Steklov eigenfunctions}



\author{Lixiang Chen}
\address{School of Mathematical Sciences, Harbin Engineering University, Harbin, Heilongjiang, PR China}
\curraddr{}
\email{clxmath@163.com}
\thanks{}

\author{BoBo Hua}
\address{School of Mathematical Sciences, LMNS, Fudan University, Shanghai, PR China}
\curraddr{}
\email{bobohua@fudan.edu.cn}
\thanks{}

\author{Yongtang Shi}
\address{Center for Combinatorics, LPMC, Nankai University, Tianjin, PR China}
\curraddr{}
\email{shi@nankai.edu.cn}
\subjclass[2020]{Primary  05C10, 49R05, 47A75, 49J40.}

\date{}

\dedicatory{}


\begin{abstract}
Let  $G=(V,E)$ be a finite connected  graph with boundary $B $. 
We prove that for a generic positive edge weight function $w \in \mathbb{R}^{|E|}$, the Steklov eigenvalues  of $(G,B,w)$ are simple and every Steklov eigenfunction does not vanish on the boundary.  More precisely, the exceptional weights are contained in  a zero set of a non-identically zero polynomial and hence form a set of Lebesgue measure zero and Hausdorff dimension at most $|E|-1$.
Our results provide a discrete extension of the genericity theorem for the Steklov problem on compact manifolds.
\end{abstract}

\maketitle
\section{Introduction}
The Steklov eigenvalue problem, originating from Steklov's century-old work on liquid sloshing \cite{Kuznetsov2014,Stekloff1902}, is an active field connecting spectral theory, geometry, and mathematical physics. 
Let $(M,g)$ be a compact smooth Riemannian manifold of dimension $d\geq 2$ with smooth boundary $\partial M$.
The Riemannian metric $g$ induces the Dirichlet-to-Neumann operator $\Lambda_g:C^\infty(\partial M)\rightarrow C^\infty(\partial M)$, which maps a function on $\partial M$ to the normal derivative of its harmonic extension. The eigenvalues and eigenfunctions of $\Lambda_g$ are called the Steklov eigenvalues and Steklov eigenfunctions, respectively.

The geometric study of Steklov eigenvalues was initially motivated by Weinstock's 1954 result \cite{Weinstock1954}, which states that among simply connected planar domains with fixed perimeter, the first nontrivial Steklov eigenvalue is maximized uniquely by the disk. Later, Fraser and Schoen studied extremal Steklov eigenvalue problems for surfaces, higher-dimensional domains, and higher Steklov eigenvalues \cite{FraserSchoen2011,FraserSchoen2013,FraserSchoen2016,FraserSchoen2019,FraserSchoen2020}. 
In particular, they revealed a deep connection between extremal Steklov metrics and free boundary minimal surfaces in the Euclidean unit ball \cite{FraserSchoen2016}.  For a detailed account of recent advances on the Steklov eigenvalue problem, see the survey  \cite{ColboisGirouardGordonSher2023}.

In spectral theory, a genericity problem asks whether a given spectral property holds for a typical choice of the underlying geometric structure, usually after excluding a small exceptional set of metrics. Uhlenbeck's seminal work \cite{Uhlenbeck1976} showed that, for a generic Riemannian metric $g$, the nonzero eigenvalues of second-order elliptic operators, in particular those of the Laplace operator $\Delta_g$, are simple, and that the corresponding eigenfunctions satisfy certain non-degeneracy properties. This result has since been generalized in several directions, including to the Hodge Laplacian on closed 3-manifolds \cite{EncisoPeraltaSalas2012}, Dirac operators on spin 3-manifolds \cite{Dahl2003}, conformally covariant operators \cite{Canzani2014}, the Laplacian on metric graphs \cite{Friedlander2005}, and the Steklov operator on compact manifolds \cite{Wang2022}.

In the discrete setting, genericity results have also been established for the graph Laplacian. It was proved in \cite{Poignard2018} that, for a generic choice of edge weights, the Laplacian eigenvalues are simple and the corresponding eigenvectors have no zero entries. For the graph Laplacian, the edge weight function enters directly into the definition of the operator, and hence the Laplacian is naturally parameterized by the edge weights.  The Steklov operator is induced by the Dirichlet problem on a graph with boundary. Consequently, the effect of edge weights on the Steklov operator is transmitted through the interior vertices to the boundary, leading to a different dependence on the weight parameters. This distinction prevents the existing genericity methods for the Laplacian from being directly applied to the Steklov operator.

Motivated by these results, we study the corresponding genericity problem for the Steklov operator in the discrete setting.  More precisely, we consider the Steklov problem on weighted graphs with boundary.
All graphs in this paper are finite, simple, and undirected graphs with a non-empty boundary. 
Let $G=(V,E)$ be a finite connected graph with boundary vertex set $B\subseteq V$.
We use $\mathbb{R}_{+}$ to denote the set of positive real numbers.
A property $\Re$ of $(G,B)$ is said to hold for generic weights if the set of edge weights
\[
\{\, w\in \mathbb{R}_{+}^{|E|} : (G,B,w)\text{ does not satisfy }\Re \,\}
\]
has Lebesgue measure zero in $\mathbb{R}_{+}^{|E|}$. In this paper, we prove that the Steklov eigenvalues  of $(G,B,w)$ are simple for 
generic weights. Moreover, we prove that the corresponding Steklov eigenfunctions generically are nowhere zero on $B$.

Our main result can be stated as follows:
\begin{thm}[Main Theorem]\label{thm:generic}
Let $G=(V,E)$ be a finite connected  graph with boundary $B$. Then the following properties hold for generic weights:
\begin{enumerate} 
    \item  The eigenspaces of Steklov eigenvalues are one-dimensional, i.e.,  all Steklov eigenvalues of $(G,B)$ are simple;
    \item  All Steklov eigenfunctions of $(G,B)$ do not vanish on $B$.
\end{enumerate}
\end{thm}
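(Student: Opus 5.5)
The plan is to encode both exceptional sets as zero sets of polynomials in $w$, prove these polynomials are not identically zero, and then invoke the standard fact that the zero set of a nonzero polynomial on $\mathbb{R}^{|E|}$ has Lebesgue measure zero.

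\textbf{Setup.} Write the weighted Laplacian in block form with respect to $V=B\sqcup I$, where $I$ denotes the interior vertices:
\[
L_w=\begin{pmatrix} L_{BB} & L_{BI}\\ L_{IB} & L_{II}\end{pmatrix}.
\]
The Steklov (Dirichlet-to-Neumann) operator is the Schur complement $\Lambda_w=L_{BB}-L_{BI}L_{II}^{-1}L_{IB}$. The block $L_{II}$ is invertible for positive weights because $G$ is connected and $B\neq\emptyset$. Set $D(w)=\det L_{II}$, which is a polynomial in $w$ and positive on $\mathbb{R}_+^{|E|}$. Then $M(w)=D(w)\Lambda_w$ is a symmetric matrix with polynomial entries. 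Its characteristic polynomial $p_w(t)=\det(tD(w)-M(w))$ has coefficients that are polynomials in $w$.

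\textbf{Simplicity.} Eigenvalues are simple iff the discriminant $\Delta(w)$ of $p_w$ in $t$ is nonzero, and $\Delta(w)$ is a polynomial in $w$. It therefore suffices to exhibit one weight, possibly nonpositive or degenerate but with $D\neq 0$, at which $\Lambda$ has simple spectrum. A polynomial identity extends from any point to all of $\mathbb{R}^{|E|}$, so such a witness can lie outside $\mathbb{R}_+^{|E|}$.

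\textbf{Nonvanishing.} The exceptional set is $\{w:\exists$ an eigenvector $f$ of $\Lambda_w$ with $f(b)=0$ for some $b\in B\}$. On the simple-spectrum locus this is equivalent to requiring that, for each $b$, $\Lambda_w$ and the principal submatrix $\Lambda_w^{(b)}$ (delete row and column $b$) share no eigenvalue. A symmetric $A$ has an eigenvector vanishing at $b$ with eigenvalue $\lambda$ iff $\lambda$ is a common eigenvalue of $A$ and $A^{(b)}$, given simplicity. This condition is the vanishing of the resultant $R_b(w)=\mathrm{Res}_t\big(p_w(t),\det(tD-M^{(b)})\big)$, again a polynomial. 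The total exceptional polynomial is then $\Delta\cdot\prod_b R_b$, and we need each factor to be nonzero at some point.

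\textbf{Witness construction (main obstacle).} The whole difficulty is producing a weight at which $\Lambda$ has simple spectrum and eigenvectors with full support. I would try induction or limiting degenerations. First take weights so that $G$ degenerates toward a spanning tree $T$: put weight $1$ on edges of $T$ and $0$ elsewhere. $L_{II}$ stays invertible because each interior vertex still connects to $B$ through $T$. Then I would perturb.

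The cleaner route is to show that the image of $w\mapsto\Lambda_w$ is rich enough. Let each boundary vertex $b$ carry an extra pendant-free parameter: choose a subset of edges $b$--$b'$, or paths via interior vertices. Scaling these by independent large factors makes $\Lambda_w\approx$ a weighted Laplacian on $B$, plus a diagonal perturbation whose entries can be chosen generically. By the result of \cite{Poignard2018}, a generic weighted graph Laplacian has simple eigenvalues and nowhere-zero eigenvectors. So if the Steklov operators realize, as a limit or up to a small perturbation, a generic connected weighted Laplacian on $B$, then openness of both conditions finishes the argument.

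Concretely, I would shrink all interior edges except a spanning forest connecting each interior vertex to one boundary vertex. Then I would let the forest weights tend to infinity. The Schur complement then converges to the Laplacian of the quotient graph obtained by contracting each interior vertex into its boundary root; this is a connected weighted graph on $B$, and its weights are generic functions of the remaining edge weights.

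The delicate point, which I expect to be the main obstacle, is that not all pairs in $B$ need to be adjacent in the contracted graph. It would be good to check that \cite{Poignard2018} applies to arbitrary connected graphs, which I believe it does. Granting this, the limiting Laplacian has the desired properties for generic remaining weights, so by continuity some finite weight does too. That makes $\Delta$ and every $R_b$ nonzero polynomials, which proves the theorem, together with the measure-zero and dimension bound.
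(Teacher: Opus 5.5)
Your proposal is correct, and it reaches the witness by a genuinely different route from the paper.

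\textbf{Where the two proofs agree and differ.} The algebraic encoding is essentially the same as the paper's. The paper uses resultants of $\chi$ with $\chi'=\sum_i\chi_i$ and with $\prod_i\chi_i$, whose denominators are powers of $\det L_{[B^c]}$; you clear denominators via $M=D\Lambda$.

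The difference is the witness. The paper zeroes the weights off a spanning tree and starts from a weighted path supplied by the path lemma of \cite{Poignard2018}. It then adds boundary vertices one at a time, as pendant paths contracted to a pendant edge of small weight $\tau$. This requires three ingredients:
\begin{enumerate}
\item Bauer--Fike and analytic perturbation for $\hat\lambda_2,\dots,\hat\lambda_{|B_P|}$;
\item a separate limit argument for the eigenvector of $\hat\lambda_1$, which bifurcates from the double zero eigenvalue of $\widehat S(0)$;
\item the auxiliary invariant that harmonic extensions do not vanish at interior vertices.
\end{enumerate}

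You instead use a single degeneration. Sending the conductances of a rooted spanning forest $F$ to infinity makes the Kron reduction converge to the Laplacian of the quotient graph on $B$. That graph is connected and its edge weights can be prescribed arbitrarily, so the full Laplacian genericity theorem plus openness finishes the argument.

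\textbf{What each route buys.} Yours is shorter, avoids all perturbation bookkeeping, and even gives a witness inside $\mathbb{R}_+^{|E|}$. The price is that you need the full statement of \cite{Poignard2018} for arbitrary connected graphs (simple spectrum and all eigenvectors nowhere zero). That is how the paper's introduction describes that result. The paper, by contrast, uses only the path case and is otherwise self-contained.

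\textbf{Two things to tighten.}
\begin{enumerate}
\item Do not set the non-forest interior edges to zero for an arbitrary $F$. For $B=\{a,b\}$, the path $a-x-y-b$ and $F=\{ax,yb\}$, zeroing $xy$ gives $\Lambda=0$ for every forest weight. Either keep all non-forest edges positive, since contracting the trees of $F$ in the connected graph $G$ yields a connected quotient. Or take $F$ to be a spanning tree of each component of $G[B^c]$ plus one edge to $B$, so that every interior non-forest edge lies inside a single tree.
\item Justify the convergence. With forest weight $K$, the energy $Q_K(f)=\min_{h|_B=f}\sum_{\{x,y\}\in E}w(x,y)(h(x)-h(y))^2$ is nondecreasing in $K$. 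It is bounded by the quotient energy $q(f)$ of the extension that is constant on each tree. The minimizer $h_K$ satisfies $K\sum_{\{x,y\}\in F}(h_K(x)-h_K(y))^2\le q(f)$, so it tends to that extension and $Q_K(f)\to q(f)$. By polarization this gives convergence of the matrices.
\end{enumerate}
Also drop your aside about a ``diagonal perturbation'', since every Steklov matrix annihilates constants.
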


\begin{remark}
\begin{enumerate}
    \item[(i)] The second assertion of Theorem~\ref{thm:generic} implies the first one, since if an eigenspace has dimension greater than one, then a suitable linear combination of eigenfunctions in this eigenspace would vanish at a prescribed boundary vertex.
    \item[(ii)] In fact, we prove that the set of exceptional weights, which do not satisfy the above assertions, is contained in a zero set of a non-identically zero  polynomial, hence is of Hausdorff dimension at most $|E|-1$ \cite[Theorem 1]{MURDZA2025103681}.
\end{enumerate}
\end{remark}

Since the set of generic weights is dense in the space of all weights $\mathbb{R}_{+}^{|E|}$, the properties stated in Theorem \ref{thm:generic} can be achieved by an arbitrarily small perturbation of a given weight. Consequently, even if the Steklov spectrum is not simple for a particular weight, or if some Steklov eigenfunction vanishes on the boundary, these degeneracies can be removed by a sufficiently small perturbation of the weights.

\begin{cor}\label{cor:mian}
Let $G=(V,E)$ be a finite connected graph with boundary $B$. For any weight $w \in \mathbb{R}_{+}^{|E|}$ and any $\varepsilon>0$, there exists a weight $\widetilde w \in \mathbb{R}_{+}^{|E|}$ with
$|\widetilde w(e)-w(e)|<\varepsilon$ for any $e \in E$ such that all Steklov eigenvalues of $(G,B,\widetilde w)$ are simple, and every Steklov eigenfunction is nonvanishing on $B$.
\end{cor}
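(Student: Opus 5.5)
The corollary follows from Theorem~\ref{thm:generic} together with the elementary fact that a set of Lebesgue measure zero has dense complement. Let $N\subset\mathbb{R}_{+}^{|E|}$ be the set of exceptional weights, i.e.\ those $w$ for which some Steklov eigenvalue of $(G,B,w)$ fails to be simple or some Steklov eigenfunction vanishes at a vertex of $B$. By Theorem~\ref{thm:generic}, $N$ has Lebesgue measure zero. (More precisely, by part (ii) of the Remark, $N$ lies in the zero set of a non-identically zero polynomial, but we only need measure zero.)

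Fix $w\in\mathbb{R}_{+}^{|E|}$ and $\varepsilon>0$. Set $\delta=\min\{\varepsilon,\min_{e\in E}w(e)\}/2>0$, and consider the open cube
\[
Q=\prod_{e\in E}\bigl(w(e)-\delta,\,w(e)+\delta\bigr).
\]
Since $\delta<\min_e w(e)$, every point of $Q$ has positive coordinates, so $Q\subset\mathbb{R}_{+}^{|E|}$. The cube $Q$ is nonempty and open, so it has positive Lebesgue measure $(2\delta)^{|E|}$.

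Because $N$ has measure zero, $Q\setminus N$ has positive measure and is in particular nonempty. Choose any $\widetilde w\in Q\setminus N$. Then $|\widetilde w(e)-w(e)|<\delta<\varepsilon$ for every $e\in E$. Since $\widetilde w\notin N$, all Steklov eigenvalues of $(G,B,\widetilde w)$ are simple and no Steklov eigenfunction vanishes on $B$, which is the claim.

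The only point needing care is that the perturbed weight stays strictly positive; choosing $\delta$ below $\min_e w(e)$ handles this. All the substantive work is contained in Theorem~\ref{thm:generic}.
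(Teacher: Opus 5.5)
Your proposal is correct and follows essentially the same route as the paper: the paper obtains the corollary directly from Theorem~\ref{thm:generic} by noting that a complement of a Lebesgue-null set is dense in $\mathbb{R}_{+}^{|E|}$. You make that density step explicit, including the small cube of side $2\delta$ with $\delta<\min_e w(e)$ that keeps the perturbed weight strictly positive.
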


\section{Preliminaries}

\subsection{The discrete Steklov operator}
Let $G=(V,E)$ be a finite connected  weighted graph with  weight function $w: E \rightarrow \mathbb{R}_{+}$, $\{x,y\} \mapsto w(x,y)$,   and let $B$ be the set of boundary vertices of $G$ with $|B| \geq 2$.
Denote by $\mathbb{R}^{V}$ the real function space defined on $V$, which is the $|V|$-dimensional Euclidean space.
For $u\in \mathbb{R}^V$, the \emph{Laplacian} $\Delta:\mathbb{R}^V \to \mathbb{R}^V$ is defined by
\begin{align*}
    \Delta u:~&V \to \mathbb{R}\\
    &x \mapsto (\Delta u)(x)=\sum_{\{x,y\}\in E}w(x,y)\left(u(x)-u(y)\right),
\end{align*}
and the discrete outward normal derivative $\partial_{\nu}:\mathbb{R}^V \to \mathbb{R}^{B}$ is defined by
\begin{align*}
   \partial_{\nu}u:~& B \to \mathbb{R}\\
    &x \mapsto (\partial_{\nu}u)(x)=\sum_{\{x,y\}\in E}w(x,y)\left(u(x)-u(y)\right).
\end{align*}
We use $B^c$ to denote the complement set of $B$ in $V.$ The Steklov eigenvalue problem on $(G,B,w)$ aims to find real numbers $\lambda$ and nontrivial functions $u \in \mathbb{R}^V$ such that
\begin{align*}
    \begin{cases}
       (\Delta u) (x)=0,  &\mbox{if $x\in B^c $},\\
        (\partial_{\nu}u)(x)=\lambda u(x), & \mbox{if $x\in B $}.
    \end{cases}
\end{align*}
Such a $\lambda$ is called a \emph{Steklov eigenvalue} of $(G,B,w)$. 
On the other hand, Steklov eigenvalues are the eigenvalues of the so-called \emph{Steklov operator} or Dirichlet-to-Neumann map:
\begin{align*}
 \Lambda: \mathbb{R}^{B} \rightarrow \mathbb{R}^{B}, f \mapsto \partial_{\nu}h_f.
\end{align*}
The eigenfunction $f$ of $\Lambda$ is called  a \emph{Steklov eigenfunction} of $(G,B,w)$, and $h_f \in \mathbb{R}^{V}$ is called the \emph{harmonic extension} of $f$.

To facilitate the use of matrix  techniques, we introduce the matrix forms of the Laplacian and the discrete Steklov operator, starting with some basic matrix terminology.
Let $M$ be an $n\times n$ matrix, and let $\alpha$ and $\beta$ be given index sets, i.e., subsets of $\{1,2,\ldots,n\}$.
We use $M_{[\alpha,\beta]}$ to  denote the submatrix of $M$ with rows indexed by $\alpha$ and columns indexed by $\beta$, both of which are thought of as increasingly ordered sequences, so the rows and columns of the submatrix appear in their natural order. We often write $M_{[\alpha]}$ for $M_{[\alpha,\alpha]}$.
The \emph{adjugate} $\operatorname{adj} M$ of $M$ is the transpose of the cofactor matrix of $M$, whose elements are given by $(\operatorname{adj} M)_{ij} = (-1)^{i+j} \det \bigl( M[ \{j\}^c, \{i\}^c ] \bigr)$.

We use  $L$ to denote the matrix representation of the Laplacian $\Delta$ of $G$.
With respect to the decomposition $V=B^c\sqcup B$ into interior and boundary vertices, $L$ admits the block form
\[
L=
\begin{pmatrix}
L_{[B^c]} & L_{[B^c,B]} \\
L_{[B,B^c]} & L_{[B]}
\end{pmatrix}.
\]
By the Matrix-Tree Theorem \cite[Theorem 7.2.2]{cve2010spectra}, every proper principal submatrix of the Laplacian matrix of a connected graph is nonsingular, hence $L_{[B^c]}$ is invertible.
The matrix representation $S$ of the Steklov operator $\Lambda$ on $G$ is the Schur complement of $L_{[B^c]}$ in $L$, namely
\begin{align}
 S&=L_{[B]}-L_{[B,B^c]}L^{-1}_{[B^c]}L_{[B^c, B]}  \label{eq:Shur} \\
  &=L_{[B]}-\frac{L_{[B,B^c]}\mathrm{adj}(L_{[B^c]})L_{[B^c, B]}}{\det(L_{[B^c]})}. \label{eq:adj}
\end{align}
In particular, since the entries of $\mathrm{adj}(L_{[B^c]})$ and $\det(L_{[B^c]})$ are polynomial functions of the entries of $L_{[B^c]}$, \eqref{eq:adj} shows that each entry of $S$ is a rational function of the entries of $L,$ and hence a rational function of the edge weights. The rational expression \eqref{eq:adj} is well-defined precisely when
$\det(L_{[B^c]})\neq 0$, which holds here because $G$ is connected. For $u,v\in B^c$, the entry $\mathcal{G}_B(u,v)=\bigl(L^{-1}_{[B^c]}\bigr)_{uv}$
is referred to as the \emph{discrete Dirichlet Green function} for the graph, as introduced by Chung and Yau \cite{Chung2000}.
For $x \in B^c$ and $y \in B$, define
\begin{align*}
  \mathcal{P}_B(x,y) =\sum_{z \in B^c, \{y,z\} \in E}\mathcal{G}_B(x,z)w(z,y).
\end{align*}
We call $\mathcal{P}_B(x,y)$ the \emph{discrete Poisson kernel}. Indeed, for a function $f$ on $B$ and  its harmonic extension $h_f$, the Poisson kernel gives the representation
\begin{align*}
  h_f(x) =\sum_{y \in B} \mathcal{P}_B(x,y)f(y).
\end{align*}
Therefore, the Steklov operator admits the following expression:
\begin{align}
\Lambda f(x) &=\sum_{\{x,y\} \in E}w(x,y)(h_f(x)-h_f(y)) \notag  \\
&=\sum_{\substack{y\in B\\ \{x,y\} \in E}}w(x,y)(f(x)-f(y))+\sum_{\substack{z\in B^c\\ \{x,z\} \in E}}w(x,z)\left(
f(x)-\sum_{y\in B}\mathcal{P}_B(z,y)f(y)
\right). \label{eq:Lambda}
\end{align}

Let $(G,w)$ be a weighted graph with underlying graph $G=(V,E)$, 
viewed as an electrical network by assigning to each edge $\{x,y\}\in E$ a resistor with resistance $1/w(x,y)$. 
The matrix $S$ of the Steklov operator is the Laplacian matrix of a weighted graph $(G_B,w_B)$ on the boundary vertex set $B$, where $G_B=(B,E_B)$.
This graph is the effective electrical network obtained from $G$ by eliminating the interior vertices in $B^c$ (also known as the Kron reduction of $G$).
Consequently, for any $x,y\in B$, the effective resistance between $x$ and $y$ in $(G_B,w_B)$ coincides with that in the original graph $G$. By \eqref{eq:Lambda} (or \eqref{eq:Shur}), we have
\begin{align*}
  \Lambda f(x) =\sum_{\substack{x,y\in B\\ x \neq y}} w_B(x,y)(f(x)-f(y)),
\end{align*}
where 
\begin{equation}\label{eq:weight}
w_B(x,y)=w(x,y)+\sum_{z,t \in B^c}w(x,z)\mathcal{G}_B(z,t)w(t,y).
\end{equation}

\begin{remark}
The connectedness assumption is not essential. If $G$ is disconnected, then the problem decomposes over the connected components of $G$. Components containing no boundary vertices do not contribute to the Steklov operator, while each component meeting $B$ contributes its own Steklov operator. Thus, after ordering the boundary vertices componentwise, $\Lambda_{(G,B)}=\bigoplus \Lambda_{(G_i,B_i)}$,
where $G_i$ are the connected components of $G$ and $B_i=B\cap V(G_i)$. Hence, the general disconnected case reduces to the connected case.
\end{remark}

\subsection{Matrix tools}
We begin by recalling several matrix-theoretic results that will serve as fundamental tools in the paper. These include a perturbation theorem for eigenvalues (Theorem \ref{thm:perturbation}), a stability theorem for simple eigenvalues and their associated eigenvectors (Theorem \ref{thm:sensi}), and a criterion for zero entries of eigenvectors of Hermitian matrices based on the interlacing theorem (Proposition \ref{pro:zero}).

The following conclusion is a direct corollary of the well-known Bauer--Fike theorem on matrix perturbation.
\begin{thm}\cite[Corollary 6.3.4]{Horn_Johnson_2012}\label{thm:perturbation}
Let $M$ and $E$ be $n$-dimensional matrices and suppose that $M$ is normal. If $ \hat{\lambda} $ is an eigenvalue of $M + E$, then there is an eigenvalue $ \lambda $ of $ M $ such that $ |\hat{\lambda} - \lambda| \leq \|E\|_2 $.
\end{thm}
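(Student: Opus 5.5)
The plan is the standard Bauer--Fike argument, made sharp by normality. Since $M$ is normal, the spectral theorem gives a unitary $U$ and a diagonal $D=\operatorname{diag}(\lambda_1,\dots,\lambda_n)$ with $M=UDU^*$, where the $\lambda_i$ are the eigenvalues of $M$. Let $\hat\lambda$ be an eigenvalue of $M+E$. If $\hat\lambda=\lambda_i$ for some $i$, the claim is trivial. So I will assume $\hat\lambda\neq\lambda_i$ for every $i$. Then $M-\hat\lambda I=U(D-\hat\lambda I)U^*$ is invertible.

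Next I factor the singular matrix $M+E-\hat\lambda I$ as
\[
M+E-\hat\lambda I=(M-\hat\lambda I)\bigl(I+(M-\hat\lambda I)^{-1}E\bigr).
\]
The first factor is invertible, so $I+(M-\hat\lambda I)^{-1}E$ must be singular. Hence $-1$ is an eigenvalue of $(M-\hat\lambda I)^{-1}E$. Since the spectral radius is bounded by any operator norm, this gives $\|(M-\hat\lambda I)^{-1}E\|_2\ge 1$.

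It remains to bound this norm using normality. By submultiplicativity and unitary invariance of the spectral norm,
\[
1\le \|U(D-\hat\lambda I)^{-1}U^*\|_2\,\|E\|_2=\|(D-\hat\lambda I)^{-1}\|_2\,\|E\|_2=\frac{\|E\|_2}{\min_i|\lambda_i-\hat\lambda|}.
\]
Therefore $\min_i|\lambda_i-\hat\lambda|\le\|E\|_2$, and some eigenvalue $\lambda$ of $M$ satisfies $|\hat\lambda-\lambda|\le\|E\|_2$.

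There is no real obstacle here. The only point needing care is the step $\|U(D-\hat\lambda I)^{-1}U^*\|_2=\max_i|\lambda_i-\hat\lambda|^{-1}$. This is exactly where normality is used, since for non-normal $M$ a condition-number factor $\kappa(U)$ would appear. It follows because unitary conjugation preserves the $2$-norm and the $2$-norm of a diagonal matrix is its largest entry in modulus.
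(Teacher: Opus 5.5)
Your argument is correct and is the Bauer--Fike proof specialized to a unitary diagonalization $M=UDU^*$. This is essentially the paper's route: it gives no proof of its own but cites the statement as the normal-matrix corollary of Bauer--Fike in Horn--Johnson, where the factor $\kappa(U)=\|U\|_2\|U^{-1}\|_2$ equals $1$ for unitary $U$.
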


Theorem~\ref{thm:perturbation} shows that eigenvalues are stable under matrix perturbations. The next result shows that, for a simple eigenvalue, the corresponding eigenvector also depends stably on the entries of the matrix.

\begin{thm}\cite[Theorem 2.1]{Sun1990}\label{thm:sensi}
Let $\mathcal{F}$ be an open subset of $\mathbb{R}^{N}$, and let
$p=(p_1,\ldots,p_N)^{\top}\in\mathcal{F}$. Let $A(p)$ be a real symmetric
$n\times n$ matrix whose entries depend analytically on $p_1,\ldots,p_N$.
Suppose that $\lambda$ is a simple eigenvalue of $A(p^*)$ for some
$p^*\in\mathcal{F}$, and let $\mathbf{x}$ be a corresponding eigenvector.
Then there exist a neighborhood $\mathcal{B}(p^*)\subseteq\mathcal{F}$ of
$p^*$, a real analytic function $\lambda(p)\in\mathbb{R}$, and a real
analytic function $\mathbf{x}(p)\in\mathbb{R}^n$, such that
\[
A(p)\mathbf{x}(p)=\lambda(p)\mathbf{x}(p),\qquad p\in\mathcal{B}(p^*),
\]
and
\[
\lambda(p^*)=\lambda,\qquad \mathbf{x}(p^*)=\mathbf{x}.
\]
\end{thm}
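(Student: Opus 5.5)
The plan is to obtain $\lambda(p)$ and $\mathbf{x}(p)$ at once from the real analytic implicit function theorem. Fix the given eigenvector $\mathbf{x}\neq 0$ of $A(p^*)$ with eigenvalue $\lambda$. Define the map $F:\mathcal{F}\times\mathbb{R}^n\times\mathbb{R}\to\mathbb{R}^n\times\mathbb{R}$ by
\[
F(p,\mathbf{y},\mu)=\bigl(A(p)\mathbf{y}-\mu\mathbf{y},\ \mathbf{x}^{\top}\mathbf{y}-\mathbf{x}^{\top}\mathbf{x}\bigr).
\]
The second component is a normalization that removes the scaling freedom of eigenvectors. It is chosen so that $F(p^*,\mathbf{x},\lambda)=0$. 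Since the entries of $A(p)$ are real analytic in $p$ and $F$ is polynomial in $(\mathbf{y},\mu)$, the map $F$ is real analytic on its domain.

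The key step is to show that the partial Jacobian of $F$ with respect to $(\mathbf{y},\mu)$ at $(p^*,\mathbf{x},\lambda)$ is nonsingular. This is the block matrix
\[
J=\begin{pmatrix} A(p^*)-\lambda I & -\mathbf{x}\\ \mathbf{x}^{\top} & 0\end{pmatrix}.
\]
Suppose $J(\mathbf{z},c)^{\top}=0$. Then $(A(p^*)-\lambda I)\mathbf{z}=c\,\mathbf{x}$ and $\mathbf{x}^{\top}\mathbf{z}=0$.

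1. Multiply the first equation on the left by $\mathbf{x}^{\top}$. Since $A(p^*)$ is symmetric, $\mathbf{x}^{\top}(A(p^*)-\lambda I)=\bigl((A(p^*)-\lambda I)\mathbf{x}\bigr)^{\top}=0$, which gives $c\,\|\mathbf{x}\|^2=0$, so $c=0$.
2. Then $\mathbf{z}\in\ker(A(p^*)-\lambda I)$. Because $\lambda$ is simple, this kernel is $\operatorname{span}\{\mathbf{x}\}$.
3. The constraint $\mathbf{x}^{\top}\mathbf{z}=0$ then forces $\mathbf{z}=0$.

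This is the step I expect to carry the argument. It is exactly where both hypotheses enter: symmetry ensures the left and right eigenvectors coincide, and simplicity ensures the kernel is one-dimensional. Without symmetry one would instead need a left eigenvector $\mathbf{v}$ with $\mathbf{v}^{\top}\mathbf{x}\neq0$, which algebraic simplicity would provide.

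With $J$ invertible, the real analytic implicit function theorem gives a neighborhood $\mathcal{B}(p^*)\subseteq\mathcal{F}$ and real analytic maps $p\mapsto(\mathbf{x}(p),\lambda(p))$ such that $F(p,\mathbf{x}(p),\lambda(p))=0$ and $(\mathbf{x}(p^*),\lambda(p^*))=(\mathbf{x},\lambda)$. The first component of $F$ gives $A(p)\mathbf{x}(p)=\lambda(p)\mathbf{x}(p)$. The second gives $\mathbf{x}^{\top}\mathbf{x}(p)=\|\mathbf{x}\|^2\neq0$, so $\mathbf{x}(p)\neq0$ is a genuine eigenvector. All objects are real because $F$ is real.

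Optionally, one can shrink $\mathcal{B}(p^*)$ using Theorem~\ref{thm:perturbation} (the normal matrix there being $A(p^*)$) together with continuity of $A$. This shows that $\lambda(p)$ stays the unique eigenvalue of $A(p)$ near $\lambda$ and remains simple, though this refinement is not needed for the statement.
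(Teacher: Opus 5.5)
Your proof is correct. The bordered Jacobian $\begin{pmatrix} A(p^*)-\lambda I & -\mathbf{x}\\ \mathbf{x}^{\top} & 0\end{pmatrix}$ is injective because symmetry forces the multiplier $c$ to vanish, and simplicity together with the normalization forces $\mathbf{z}=0$. The real analytic implicit function theorem then yields $\lambda(p)$ and $\mathbf{x}(p)$ with the prescribed values at $p^*$.

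The paper gives no argument of its own for this statement; it is quoted from Sun. So the useful comparison is with the other classical route. Since $\lambda$ is a simple root of $\chi(\mu,p)=\det(\mu I-A(p))$, the scalar implicit function theorem gives an analytic $\lambda(p)$. One may then take $\mathbf{x}(p)=c^{-1}\operatorname{adj}\bigl(\lambda(p)I-A(p)\bigr)\mathbf{x}$ with $c=\partial_\mu\chi(\lambda,p^*)\neq0$. This works because $\operatorname{adj}(\lambda I-A(p^*))$ equals $c$ times the orthogonal projection onto $\operatorname{span}\{\mathbf{x}\}$, so $\mathbf{x}(p^*)=\mathbf{x}$, and because $(\lambda(p)I-A(p))\mathbf{x}(p)=c^{-1}\chi(\lambda(p),p)\mathbf{x}=0$. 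That route is explicit and matches the paper's characteristic-polynomial and adjugate viewpoint. Yours produces the eigenvalue and eigenvector in a single step and, as you note, carries over unchanged to algebraically simple eigenvalues of non-symmetric families.

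One caveat on your optional refinement. The Bauer--Fike bound of Theorem~\ref{thm:perturbation} only localizes the spectrum of $A(p)$ near that of $A(p^*)$; it does not count eigenvalues. Showing that $\lambda(p)$ is the only eigenvalue near $\lambda$, and that it is simple, needs an extra counting step. The simplest is Weyl's inequality $|\lambda_i(A(p))-\lambda_i(A(p^*))|\le\|A(p)-A(p^*)\|_2$ for symmetric matrices. This is not part of the statement, but it is what the paper implicitly relies on when it identifies the analytic branch $\lambda(\tau)$ with $\hat\lambda_i(\tau)$.
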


\begin{thm}\cite[Cauchy's interlacing theorem]{Horn_Johnson_2012}\label{thm:interlacing}
Let $B\in M_n$ be Hermitian, let $y\in \mathbb{C}^n$ and $a\in \mathbb{R}$ be given, and let $A= \begin{bmatrix}
B & y\\
y^* & a
\end{bmatrix}
\in M_{n+1}
$, where $y^*$ denotes the conjugate transpose of $y$.
Then
\begin{align}\label{eq:cauchy}
\lambda_1(A)\le \lambda_1(B)\le \lambda_2(A)\le \cdots
\le \lambda_n(A)\le \lambda_n(B)\le \lambda_{n+1}(A).
\end{align}
Moreover, $\lambda_i(A)=\lambda_i(B)$ if and only if there is a nonzero $z\in \mathbb{C}^n$ such that $ Bz=\lambda_i(B)z$, $y^*z=0$,  $Bz=\lambda_i(A)z$;
and $\lambda_i(B)=\lambda_{i+1}(A)$ if and only if there is a nonzero $z\in \mathbb{C}^n$ such that $Bz=\lambda_i(B)z$,  $y^*z=0$,  $Bz=\lambda_{i+1}(A)z$.
If no eigenvector of $B$ is orthogonal to $y$, then every inequality in \eqref{eq:cauchy} is a strict inequality.
\end{thm}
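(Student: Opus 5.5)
We would prove this standard result in two stages. First we get the interlacing inequalities \eqref{eq:cauchy} from the Courant--Fischer min-max principle. Then we get the equality cases and the strictness statement from an explicit secular equation for the characteristic polynomial of $A$.

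\textbf{Step 1 (interlacing).} Write $A=\begin{bmatrix} B & y\\ y^* & a\end{bmatrix}$ and embed $\mathbb{C}^n\hookrightarrow\mathbb{C}^{n+1}$ by $z\mapsto (z,0)$. For these vectors the Rayleigh quotients of $A$ and $B$ coincide. By Courant--Fischer,
\[
\lambda_k(A)=\min_{\dim W=k}\ \max_{0\ne x\in W}\frac{x^*Ax}{x^*x}.
\]
Restricting the minimum to subspaces $W\subseteq\mathbb{C}^n\oplus 0$ gives $\lambda_k(A)\le\lambda_k(B)$.

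For the other inequality, apply the max-min form
\[
\lambda_{k+1}(A)=\max_{\dim W=n+1-k}\ \min_{0\ne x\in W}\frac{x^*Ax}{x^*x}.
\]
Intersecting $W$ with $\mathbb{C}^n\oplus 0$ loses at most one dimension, which yields $\lambda_k(B)\le\lambda_{k+1}(A)$.

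\textbf{Step 2 (secular equation).} Diagonalize $B=U D U^*$ with $D=\operatorname{diag}(d_1,\dots,d_n)$ and $U$ unitary, and set $c=U^*y$. Conjugating $A$ by $U\oplus 1$ and taking a Schur complement, for $t\notin\{d_j\}$ we get
\[
\det(tI-A)=\prod_{j=1}^n (t-d_j)\Bigl(t-a-\sum_{j=1}^n\frac{|c_j|^2}{t-d_j}\Bigr).
\]
The function $f(t)=t-a-\sum_j |c_j|^2/(t-d_j)$ is strictly increasing on each interval between consecutive poles. It tends to $\mp\infty$ at the left and right ends of each such interval whenever the corresponding $c_j\neq0$.

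Now assume no eigenvector of $B$ is orthogonal to $y$. Then every eigenvalue of $B$ is simple: otherwise some combination of two eigenvectors would be orthogonal to $y$. Also every $c_j\neq 0$. Hence $f$ has exactly one zero in each of the $n+1$ open intervals determined by $d_1<\dots<d_n$, and no $d_j$ is a root of $\det(tI-A)$. This gives strict interlacing.

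\textbf{Step 3 (equality cases).} The ``if'' direction is immediate. If $Bz=\mu z$ and $y^*z=0$, then $A(z,0)^{\top}=\mu(z,0)^{\top}$, so $\mu$ is an eigenvalue of $A$.

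The ``only if'' direction is the main bookkeeping obstacle, because it must be matched with the correct index $i$. We argue by multiplicities. Let $\mu=\lambda_i(B)$, with multiplicity $m$ in $B$. Let $m'$ be the dimension of the part of the $\mu$-eigenspace of $B$ orthogonal to $y$; this is $m$ or $m-1$.

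From the factorization in Step 2, the multiplicity of $\mu$ as an eigenvalue of $A$ equals $m'$ plus $[f(\mu)=0]$, where $f$ is computed with the poles having $c_j=0$ removed. If no nonzero $z$ in the $\mu$-eigenspace is orthogonal to $y$, then $m=1$ and $c_j\neq0$ for the corresponding index. In that case $\mu$ is a pole of $f$, so $\mu$ is not an eigenvalue of $A$. In particular $\lambda_i(A)\ne\mu$ and $\lambda_{i+1}(A)\ne\mu$.

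Conversely, if $\lambda_i(A)=\mu$ or $\lambda_{i+1}(A)=\mu$, such a $z$ must exist. Being an eigenvector of $B$ for $\mu$, it automatically satisfies $Bz=\lambda_i(A)z$, respectively $Bz=\lambda_{i+1}(A)z$. This completes both characterizations.
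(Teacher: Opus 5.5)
The paper gives no proof of this statement (it is quoted from Horn--Johnson), so there is no internal argument to compare against. Your route is a correct, self-contained proof, apart from two small slips: Courant--Fischer for the inequalities, and the secular equation $\det(tI-A)=\prod_j(t-d_j)\,f(t)$ for equality and strictness.

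\textbf{Step 1.} Intersecting with $\mathbb{C}^n\oplus 0$ inside the max--min formula gives an \emph{upper} bound on $\lambda_{k+1}(A)$, not the lower bound you want. The inequality $\lambda_k(B)\le\lambda_{k+1}(A)$ follows in either of two ways:
\begin{itemize}
\item restrict the maximum to $(n+1-k)$-dimensional subspaces of $\mathbb{C}^n\oplus 0$; or
\item use the min--max form, since every $(k+1)$-dimensional $W$ meets $\mathbb{C}^n\oplus 0$ in dimension at least $k$.
\end{itemize}

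\textbf{Step 3, ``if'' direction.} Your argument only shows that $\mu$ is \emph{some} eigenvalue of $A$, and this does not fix its index. For example, take $B=\operatorname{diag}(0,1)$, $y=(0,1)^{\top}$, $a=10$. Then $z=\mathbf e_1$ satisfies $Bz=\lambda_1(B)z$ and $y^*z=0$, yet $\lambda_2(A)=(11-\sqrt{85})/2\neq\lambda_1(B)$. The direction is still trivial once you use the third hypothesis: $\lambda_i(A)z=Bz=\lambda_i(B)z$ with $z\neq 0$, and likewise with $\lambda_{i+1}(A)$.

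\textbf{Step 3, multiplicity formula.} You assert the general formula rather than derive it. You only need the case $m=1$, $c_j\neq 0$, and there $\det(\mu I-A)=-|c_j|^2\prod_{l\neq j}(\mu-d_l)\neq 0$ settles it.

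\textbf{What your approach buys.} Step 3 actually proves an index-free statement: any eigenvalue shared by $A$ and $B$ has a $B$-eigenvector orthogonal to $y$. This is exactly what Proposition~\ref{pro:zero} uses, since ``shares an eigenvalue'' there carries no index. Deducing it from the indexed version would need an extra interlacing step: $\lambda_i(B)=\lambda_j(A)$ forces $\lambda_i(A)=\lambda_i(B)$ or $\lambda_{i+1}(A)=\lambda_i(B)$. Finally, strictness follows at once from the equality characterization, so the root count in Step 2 is optional.
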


The following proposition, based on Cauchy's interlacing theorem, gives a useful criterion for a Hermitian matrix to admit an eigenvector with a zero entry.
We emphasize that the Hermitian assumption cannot be dropped, since the sufficiency part may fail for non-Hermitian matrices.

\begin{pro}\label{pro:zero}
Let $A \in  M_n$ be a Hermitian matrix. Then $A$ admits an eigenvector having a zero entry  if and only if $A$ shares an eigenvalue with one of its principal submatrices of order $n-1$.
\end{pro}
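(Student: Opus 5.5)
Both directions will be handled by reducing to the submatrix obtained by deleting one row and column. After a simultaneous permutation of rows and columns (a unitary similarity, so eigenvalues and the zero pattern of eigenvectors are preserved up to reordering), we may assume the zero entry is the last one. We therefore write $A=\begin{bmatrix} B & y\\ y^* & a\end{bmatrix}$, with $B$ the principal submatrix of order $n-1$ obtained by deleting the last row and column. Then:
\begin{itemize}
\item \emph{``$A$ has an eigenvector with a zero entry''} becomes \emph{``$A$ has an eigenvector whose last entry is $0$''}, for some choice of index;
\item \emph{``$A$ shares an eigenvalue with a principal submatrix of order $n-1$''} becomes \emph{``$A$ and $B$ share an eigenvalue''}, for some choice of index.
\end{itemize}

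\textbf{Necessity.} Suppose $A\binom{z}{0}=\lambda\binom{z}{0}$ with $z\neq 0$. Reading off the block equations gives $Bz=\lambda z$ and $y^*z=0$. Thus $\lambda$ is an eigenvalue of both $A$ and $B$. No interlacing is needed here.

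\textbf{Sufficiency.} Suppose $\mu$ is an eigenvalue of both $A$ and $B$. I would split into two cases.
\begin{itemize}
\item \emph{Some eigenvector $z$ of $B$ for $\mu$ satisfies $y^*z=0$.} Then $\binom{z}{0}$ is an eigenvector of $A$ for $\mu$, and its last entry is $0$.
\item \emph{No eigenvector of $B$ for $\mu$ is orthogonal to $y$.} I would first try the direct argument rather than the full strict-interlacing statement of Theorem~\ref{thm:interlacing}, whose hypothesis requires that no eigenvector of $B$ at all be orthogonal to $y$. Let $\binom{v}{t}$ be an eigenvector of $A$ for $\mu$. If $t=0$ we are done, as above. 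Otherwise the first block row gives $(B-\mu I)v=-ty$. Because $B$ is Hermitian, the range of $B-\mu I$ is orthogonal to $\ker(B-\mu I)$. So for any eigenvector $z$ of $B$ for $\mu$,
\[
0=z^*(B-\mu I)v=-t\,z^*y .
\]
Since $t\neq 0$, this forces $y^*z=0$, contradicting the case assumption. Hence $t=0$ after all.
\end{itemize}

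In both cases $A$ has an eigenvector with last entry zero, which proves sufficiency.

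\textbf{Main obstacle.} The delicate point is the second case of sufficiency, and it is exactly where the Hermitian hypothesis enters: the orthogonality of $\operatorname{range}(B-\mu I)$ and $\ker(B-\mu I)$ fails for non-Hermitian matrices. This is consistent with the remark before the proposition that sufficiency may fail without the Hermitian assumption. Alternatively, the interlacing characterization in Theorem~\ref{thm:interlacing} ($\lambda_i(A)=\lambda_i(B)$ iff some eigenvector $z$ of $B$ has $y^*z=0$) gives the same conclusion, but the direct computation avoids any index bookkeeping.
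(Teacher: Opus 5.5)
Your proof is correct. It has the same skeleton as the paper's: permute the zero index to the last position, use the bordered form $A=\begin{pmatrix}B&y\\y^*&a\end{pmatrix}$, and produce $\binom{z}{0}$ with $Bz=\mu z$, $y^*z=0$. Where you differ is in how you obtain the key vector $z$.

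The paper cites the equality case of Cauchy's interlacing theorem (Theorem~\ref{thm:interlacing}) for this step. That citation also silently uses the fact that a common eigenvalue of $A$ and $B$ must appear as $\lambda_i(A)=\lambda_i(B)$ or $\lambda_i(B)=\lambda_{i+1}(A)$ for some $i$. This follows from the interlacing inequalities, but the paper does not spell it out.

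You prove the needed fact directly. Since $\mu$ is real, $B-\mu I$ is Hermitian, so its range is orthogonal to its kernel. Hence an eigenvector $\binom{v}{t}$ of $A$ with $t\neq 0$ forces every $\mu$-eigenvector of $B$ to be orthogonal to $y$. Your route is self-contained, needs no eigenvalue ordering, and shows exactly where the Hermitian hypothesis is used. It also writes out the necessity direction, which the paper leaves implicit. The paper's route is shorter because it hands this step to a textbook theorem.

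One simplification is available. Your second case is actually empty: your own computation shows that if $t\neq 0$ then every $\mu$-eigenvector of $B$ is orthogonal to $y$, and if $t=0$ then $v$ itself is such an eigenvector. So you can drop the case split. Take any $\mu$-eigenvector $\binom{v}{t}$ of $A$. If $t=0$ you are done; otherwise $\binom{z}{0}$ works for any $\mu$-eigenvector $z$ of $B$.
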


\begin{proof}
Suppose that $A$ shares an eigenvalue $\lambda$ with a principal submatrix $B$ of order $n-1$. After a suitable permutation similarity, we may assume that $A$ is partitioned as in Theorem \ref{thm:interlacing}.  By the equality case in Cauchy's interlacing theorem, $A$ shares an eigenvalue $\lambda$ with $B$ if and only if there exists  an eigenvector  $z$ of $B$ corresponding to $\lambda$ such that $y^{*}z=0$.
One easily checks that
$\begin{bmatrix}
z\\
0
\end{bmatrix}
$
is an eigenvector of $A$ corresponding to $\lambda$. This  completes  the proof.
\end{proof}

\begin{thm}\cite[2.6.5]{Herbert_1996}\label{thm:map}
For every nonzero polynomial map $p:\mathbb{R}^n \rightarrow \mathbb{R}$, the $n$-dimensional Lebesgue measure of the set $\{x:p(x)=0\}$ is zero.
\end{thm}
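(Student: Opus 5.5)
We argue by induction on the dimension $n$, slicing the zero set $Z=\{x: p(x)=0\}$ along the last coordinate and applying Fubini--Tonelli. First, $Z$ is closed because $p$ is continuous, so $Z$ is Borel measurable. This makes the use of Tonelli's theorem on the indicator function $\mathbf 1_Z$ legitimate.

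For $n=1$, a nonzero polynomial in one variable of degree $d$ has at most $d$ real roots. Hence $Z$ is finite and has Lebesgue measure zero.

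For the inductive step, assume the statement holds in dimension $n-1$. Write $x=(x',x_n)$ with $x'\in\mathbb{R}^{n-1}$, and expand
\[
p(x',x_n)=\sum_{k=0}^{d} q_k(x')\,x_n^{k},
\]
where the $q_k$ are polynomials on $\mathbb{R}^{n-1}$. Since $p\not\equiv 0$, some coefficient $q_{k_0}$ is not the zero polynomial. Let $N=\{x'\in\mathbb{R}^{n-1}: q_{k_0}(x')=0\}$. If $n-1\ge 1$, the induction hypothesis gives that $N$ has $(n-1)$-dimensional measure zero; if $q_{k_0}$ is a nonzero constant, then $N=\emptyset$.

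Now fix $x'\notin N$. Then $t\mapsto p(x',t)$ is a nonzero one-variable polynomial, so the slice $Z_{x'}=\{t:(x',t)\in Z\}$ is finite and has one-dimensional measure zero. By Tonelli,
\[
|Z|=\int_{\mathbb{R}^{n-1}}|Z_{x'}|\,dx'=\int_{N}|Z_{x'}|\,dx'+\int_{N^c}|Z_{x'}|\,dx' .
\]
The first integral vanishes because $N$ is null. The second vanishes because its integrand is identically zero. Therefore $|Z|=0$.

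The only delicate point is to slice along a coordinate in which $p$ is genuinely nonconstant on almost every slice. This is handled by choosing a coefficient $q_{k_0}\not\equiv 0$ and discarding its zero set $N$, which is null by the induction hypothesis. Everything else is routine measure theory.
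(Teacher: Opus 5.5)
Your argument is correct and complete. Closedness of $Z$ justifies the use of Tonelli. The induction hypothesis makes $N$ null, so the possibly infinite slice measures $|Z_{x'}|$ for $x'\in N$ do no harm. For $x'\notin N$, the slice is the zero set of a one-variable polynomial whose $t^{k_0}$-coefficient $q_{k_0}(x')$ is nonzero, hence finite.

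The paper does not prove this statement; it is quoted from \cite[2.6.5]{Herbert_1996}. Your induction on $n$ with Fubini--Tonelli slicing is the standard proof of that fact, so it simply makes the note self-contained.

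One small wording point in your closing remark: what you need, and what you actually use, is that $t\mapsto p(x',t)$ is not identically zero for $x'\notin N$. It need not be nonconstant. When $k_0=0$ the slice may be a nonzero constant, which is harmless since it has no zeros.
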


\subsection{Some properties of discrete Steklov operators and eigenvalues}

A path in $G$ whose all internal vertices are not boundary vertices of $G$ is called an interior path. In particular, an edge $\{x,y\}$ for $x,y\in B$ is regarded as an interior path.
\begin{pro}\label{pro:interior}
Let $(G,B,w)$ be a finite connected  weighted graph with boundary $B$.
Let $G_B=(B,E_B)$, and let $(G_B,w_B)$ denote the effective electrical network associated with $(G,B,w)$ obtained by eliminating the interior vertices in $B^c$.
Then, for any two distinct vertices $x,y\in B$,  $\{x,y\}\in E_B$ if and only if there exists an interior path between $x$ and $y$ in $G$.
\end{pro}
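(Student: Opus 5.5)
The plan is to prove that $w_B(x,y)>0$ exactly when an interior path joins $x$ and $y$, using formula \eqref{eq:weight} together with a walk-expansion of the Dirichlet Green function $\mathcal{G}_B$. First I will establish that $\mathcal{G}_B(z,t)\ge 0$ for all $z,t\in B^c$. Moreover, I will show that $\mathcal{G}_B(z,t)>0$ if and only if $z$ and $t$ lie in the same connected component of the induced subgraph $G[B^c]$.

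To see this, write $L_{[B^c]}=D-A$. Here $D$ is the diagonal matrix of weighted degrees, taken in the full graph $G$, and $A$ is the weighted adjacency matrix of $G[B^c]$. Then $D^{-1}A$ is entrywise nonnegative. Its row sums are $<1$ on vertices adjacent to $B$ and $\le 1$ elsewhere. Since $G$ is connected, every component of $G[B^c]$ touches $B$, so the spectral radius of $D^{-1}A$ is $<1$. This follows by a standard substochastic argument: the matrix is irreducible on each component and has a strictly deficient row. Hence the Neumann series gives
\[
L_{[B^c]}^{-1}=\sum_{k\ge 0}(D^{-1}A)^kD^{-1}.
\]
This is a sum of nonnegative terms. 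Its $(z,t)$ entry is positive precisely when some walk inside $G[B^c]$ goes from $z$ to $t$ (the $k=0$ term handles $z=t$).

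With this in hand, both directions follow from \eqref{eq:weight}, since every summand there is nonnegative. For ($\Leftarrow$), suppose an interior path joins $x$ and $y$. If it is the edge $\{x,y\}$, then $w(x,y)>0$ already. Otherwise the path is $x,z,\dots,t,y$ with all internal vertices in $B^c$. Then $w(x,z)>0$ and $w(t,y)>0$. Also $z,t$ lie in the same component of $G[B^c]$, so $\mathcal{G}_B(z,t)>0$. That summand is positive, hence $w_B(x,y)>0$ and $\{x,y\}\in E_B$.

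For ($\Rightarrow$), suppose $w_B(x,y)>0$. Then either $w(x,y)>0$, which gives an edge and hence an interior path, or some term $w(x,z)\mathcal{G}_B(z,t)w(t,y)>0$. In the latter case $x\sim z$, $t\sim y$, and $z,t$ are connected in $G[B^c]$. Concatenating the edge $xz$, a path from $z$ to $t$ in $G[B^c]$, and the edge $ty$ gives an interior path from $x$ to $y$.

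Here I am reading \eqref{eq:weight} with the convention that $w(a,b)=0$ for non-edges, and with the sign conventions of \eqref{eq:Lambda}. The off-diagonal entry of $S$ is $-w(x,y)-\sum w(x,z)\mathcal{G}_B(z,t)w(t,y)$, so $w_B(x,y)=-S_{xy}$, consistent with \eqref{eq:Shur}.

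The main obstacle I expect is the positivity claim for $\mathcal{G}_B$, namely justifying convergence of the Neumann series. If the spectral radius argument proves delicate, I would instead use the fact that $L_{[B^c]}$ is a nonsingular M-matrix, since it is a Z-matrix that is weakly diagonally dominant and irreducibly so on each block. Such a matrix has a nonnegative inverse. Its inverse is also entrywise positive on irreducible blocks, which are exactly the components of $G[B^c]$, and zero between different blocks.
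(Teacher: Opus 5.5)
Your proof is correct and follows the paper's argument: both show that $\mathcal{G}_B(z,t)>0$ exactly when $z,t$ lie in the same component of $G[B^c]$ and $\mathcal{G}_B(z,t)=0$ otherwise, and then read off the sign of $w_B(x,y)$ from \eqref{eq:weight}, whose summands are all nonnegative. The only difference is that you prove the positivity of $L_{[B^c]}^{-1}$ on each component via the Neumann series $\sum_{k\ge 0}(D^{-1}A)^kD^{-1}$, whereas the paper cites the fact that a nonsingular irreducible $M$-matrix has a positive inverse, which is exactly your fallback argument.
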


\begin{proof}
Let $C_1,\ldots,C_m$ be the connected components of the subgraph of $G$ induced by $B^c$. Without loss of generality, we may write
\[
L_{[B^c]}
=
\operatorname{diag}(L_1,\ldots,L_m),
\]
where $L_i$ is induced by the connected component $C_i$.
Thus, for each $i=1,\ldots,m$, the matrix $L_i$ is an irreducible $M$-matrix. Note that each $L_i$ is a nonsingular irreducible $M$-matrix.
It is known that the inverse of a nonsingular irreducible $M$-matrix is positive \cite[7.2.P31]{Horn_Johnson_2012}.
Thus, Green function $\mathcal{G}(u,v)>0$ if and only if $u$ and $v$ belong to the same connected component $C_i$; otherwise $\mathcal{G}(u,v)=0$.
Equivalently, there exists an interior path between $u$ and $v$ whose vertices all belong to $B^c$.

For distinct vertices $x,y\in B$, by \eqref{eq:weight}, we have
\begin{align*}
w_B(x,y)=w(x,y)+\sum_{u,v \in B^c}w(x,u)\mathcal{G}(u,v)w(v,y).
\end{align*}
We conclude that $w_B(x,y) >0$ if and only if $w(x,y) >0$  or there exist vertices $u,v\in B^c$ such that $\{x,u\}\in E$, $\{v,y\}\in E$ and $\mathcal{G}(u,v)>0$. This completes the proof. 
\end{proof}

\begin{remark}
Hua et al.~\cite[Theorem 2.1]{Hua2025} give an expression of $w_B(x,y)$ in terms of the effective capacity of an electrical network. Together with standard electrical network theory, Proposition \ref{pro:interior} also follows from their result. 
We provide a proof here for completeness.
\end{remark}

\begin{pro}\label{pro:contracting}
Let $G$ be a finite connected weighted graph with boundary $B$.
Suppose that an interior vertex $v\in B^c$ has exactly two neighbours $u_1,u_2,$ and $\{u_1,u_2 \} \notin E.$
Let $(G',B,w')$ be obtained from $(G,B,w)$ by contracting the path $u_1-v-u_2$ to a single edge $\{u_1,u_2\}$ with weight $w'(u_1,u_2)=\frac{w(u_1,v)w(v,u_2)}{w(u_1,v)+w(v,u_2)}$.
Then $(G,B,w)$ and $(G',B,w')$ have the same Steklov matrix.
\end{pro}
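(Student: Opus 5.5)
The plan is to eliminate the single interior vertex $v$ by one step of Gaussian elimination and to use that the Schur complement can be taken in stages (the quotient formula for Schur complements). Order $V$ so that $v$ comes first, followed by the remaining interior vertices $B^c\setminus\{v\}$, and then the boundary $B$.

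By the quotient formula, the Schur complement of $L_{[B^c]}$ in $L$ equals the Schur complement of $(L/L_{[\{v\}]})_{[B^c\setminus\{v\}]}$ in $L/L_{[\{v\}]}$. Here $L/L_{[\{v\}]}$ denotes the Schur complement of the $1\times 1$ block $L_{[\{v\}]}=w(u_1,v)+w(v,u_2)>0$.

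So the first task is to compute $L/L_{[\{v\}]}$ directly. The row and column of $L$ indexed by $v$ have nonzero off-diagonal entries only at $u_1$ and $u_2$, namely $-w(u_1,v)$ and $-w(v,u_2)$. Subtracting
\[
\frac{1}{w(u_1,v)+w(v,u_2)}\,c\,c^{\top},
\]
where $c$ is the column of $v$ restricted to $V\setminus\{v\}$, changes only the entries at $(u_1,u_1)$, $(u_2,u_2)$, $(u_1,u_2)$ and $(u_2,u_1)$.

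Write $a=w(u_1,v)$ and $b=w(v,u_2)$. The diagonal entry at $u_1$ decreases by $a^2/(a+b)$, so the old contribution $a$ becomes $a-a^2/(a+b)=ab/(a+b)$. Symmetrically, the contribution $b$ at $u_2$ becomes $ab/(a+b)$. The off-diagonal entry at $(u_1,u_2)$ was $0$, since $\{u_1,u_2\}\notin E$, and it becomes $-ab/(a+b)$.

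This is exactly the Laplacian matrix $L'$ of $(G',w')$ with $w'(u_1,u_2)=ab/(a+b)$. The removed vertex $v$ is interior, so the boundary set and its ordering are unchanged.

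Hence the Steklov matrix of $(G',B,w')$ is the Schur complement of $L'_{[B^c\setminus\{v\}]}$ in $L'$. By the quotient formula, this equals the Schur complement of $L_{[B^c]}$ in $L$, which is $S$.

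The remaining points are straightforward checks:
\begin{itemize}
\item $G'$ is connected, since the path $u_1$--$v$--$u_2$ is replaced by the edge $\{u_1,u_2\}$. By the Matrix-Tree argument already cited, $L'_{[B^c\setminus\{v\}]}$ is invertible, so \eqref{eq:Shur} applies to $G'$.
\item The quotient formula requires invertibility of both $L_{[\{v\}]}$ and $L_{[B^c]}$, and both hold.
\item If $B^c=\{v\}$, the second stage is vacuous and the identity is immediate.
\end{itemize}

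The main obstacle is justifying the quotient property cleanly. To avoid citing it, I would instead argue via harmonic extensions. For $f\in\mathbb{R}^B$, harmonicity at $v$ forces
\[
h_f(v)=\frac{a\,h_f(u_1)+b\,h_f(u_2)}{a+b},
\]
and substituting this shows that the flows from $u_1$ and $u_2$ through $v$ equal $\frac{ab}{a+b}\bigl(h_f(u_1)-h_f(u_2)\bigr)$ and its negative.

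It follows that the restriction of $h_f$ to $V\setminus\{v\}$ is harmonic in $G'$ at every interior vertex, and that its normal derivatives on $B$ agree with those in $G$. This holds even when $u_1$ or $u_2$ lies in $B$. By uniqueness of the harmonic extension, $\Lambda_G f=\Lambda_{G'}f$ for all $f$, so the Steklov matrices coincide.
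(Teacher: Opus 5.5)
Your proposal is correct and follows the same idea as the paper, which simply invokes the series law for resistors and concludes that the effective network on $B$ is unchanged. Your one-step Schur complement at $v$ is the series law written in matrix form. The quotient formula, or equivalently your harmonic-extension computation showing that the flow through $v$ is $\frac{ab}{a+b}\bigl(h_f(u_1)-h_f(u_2)\bigr)$, supplies a justification the paper leaves implicit: eliminating $v$ first does not change the Kron reduction.
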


\begin{proof}
By the series law for resistors, the path $u_1-v-u_2$ is equivalent to a single edge ${u_1,u_2}$ with conductance $ \left(\frac{1}{w(u_1,v)}+\frac{1}{w(v,u_2)}\right)^{-1}=w'(u_1,u_2)$.
Thus, the effective electrical network $G_B$ is unchanged, and hence the Steklov matrix is the same.
\end{proof}

\begin{lem}\cite[Lemma A.2]{Poignard2018}\label{lem:Laplacepath}
For path $P=(V_P,E_P)$, there exists a positive weight function $w_P\in \mathbb{R}^{|E_P|}_{+}$ such that all Laplacian eigenvalues of the weighted graph $(P,w_P)$ are simple and every Laplacian eigenfunction of $(P,w_P)$ is nowhere zero on  $V_P$.
\end{lem}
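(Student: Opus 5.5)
The plan is to argue by induction on the number of vertices $n=|V_P|$. The weight on the newly added last edge will be a small parameter $\delta>0$, and perturbation theory will carry simplicity and the nonvanishing property from $P_{n-1}$ to $P_n$.

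For $n=1,2$ the claim is immediate. For $n=2$ the eigenvalues are $0$ and $2w$, with eigenvectors $(1,1)$ and $(1,-1)$. For the inductive step, label the vertices of $P_n$ as $1,\dots,n$. By hypothesis there are positive weights $w_1,\dots,w_{n-2}$ on $P_{n-1}$ whose Laplacian $L'$ has simple eigenvalues $0=\mu_1<\mu_2<\dots<\mu_{n-1}$ and eigenvectors with no zero entry. Put weight $\delta$ on the edge $\{n-1,n\}$. The Laplacian of $P_n$ is then
\[
L_\delta=\begin{pmatrix} L' & 0\\ 0 & 0\end{pmatrix}+\delta\,(e_{n-1}-e_n)(e_{n-1}-e_n)^{\top}=:L_0+\delta E .
\]
The matrix $L_0$ has the simple eigenvalues $\mu_2,\dots,\mu_{n-1}$ and a double eigenvalue $0$. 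Since $L_\delta$ is a real symmetric matrix depending analytically on $\delta$, Theorem~\ref{thm:perturbation} and Theorem~\ref{thm:sensi} apply.

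\textbf{Eigenvalues near $\mu_j$, $j\ge 2$.} By Theorem~\ref{thm:perturbation}, for small $\delta$ the matrix $L_\delta$ has exactly one eigenvalue $\lambda_j(\delta)$ near each $\mu_j$, and these stay bounded away from $0$ and from one another. By Theorem~\ref{thm:sensi}, the eigenvector $x(\delta)$ depends analytically on $\delta$ and tends to $(x_j,0)$, where $x_j$ is the $\mu_j$-eigenvector of $L'$. Its first $n-1$ entries are therefore nonzero for small $\delta$. The remaining entry is computed from the $n$-th row of the eigen-equation, $\delta(x_n-x_{n-1})=\lambda_j x_n$, which gives
\[
x_n=\frac{\delta\,x_{n-1}}{\delta-\lambda_j(\delta)}\neq 0 ,
\]
because $x_{n-1}\neq0$ and $\lambda_j(\delta)\neq\delta$ once $\delta$ is small.

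\textbf{The eigenvalues near $0$.} This is the main obstacle, since $0$ is a double eigenvalue of $L_0$ and Theorem~\ref{thm:sensi} does not apply directly. I would handle it explicitly. The constant vector $\mathbf 1$ is always an eigenvector of $L_\delta$ with eigenvalue $0$, and it has no zero entry. Because $L_\delta$ is symmetric, the remaining small eigenvalue $\lambda(\delta)$ has an eigenvector orthogonal to $\mathbf 1$. Let $\Pi_\delta$ be the spectral projection onto the eigenvalues of $L_\delta$ near $0$. It converges to the projection onto $\ker L_0=\mathrm{span}\{(\mathbf 1_{n-1},0),\,e_n\}$, since the other eigenvalues are separated from $0$. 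Hence the normalized second eigenvector converges to the unique unit vector in that kernel orthogonal to $\mathbf 1$, namely a multiple of $(1,\dots,1,-(n-1))$, which has no zero entry. First-order perturbation theory gives $\lambda(\delta)=\delta\,\tfrac{n}{n-1}+O(\delta^2)>0$. So this eigenvalue is simple and distinct from $0$ and from the $\lambda_j(\delta)$.

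Fixing $\delta>0$ small enough that all of these conditions hold simultaneously gives the required weight $w_P=(w_1,\dots,w_{n-2},\delta)$, and completes the induction.

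Two remarks. First, simplicity is in fact automatic for any positive weights, since $L_\delta$ is an irreducible tridiagonal (Jacobi) matrix; the induction nevertheless gives it directly. Second, as a cross-check of the nonvanishing property, one can use Proposition~\ref{pro:zero}. A zero entry at vertex $k$ would force $L_\delta$ to share an eigenvalue with $L_{\delta[\{k\}^c]}$, and this is ruled out because the eigenvector limits computed above have all entries nonzero.
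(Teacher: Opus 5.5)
Your induction is correct. The paper contains no proof of this lemma; it is imported from \cite{Poignard2018}, so there is no in-paper argument to match.

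What you wrote is essentially the paper's own scheme from Section~\ref{sec:main}, specialized to the case where every vertex is a boundary vertex. There the Steklov matrix is the Laplacian and Lemma~\ref{lem:pendantedge} reduces to its $u\in B$ case. As in the paper, you attach the new vertex by an edge of small weight. You then use Theorems~\ref{thm:perturbation} and~\ref{thm:sensi} for the branches issuing from the simple nonzero eigenvalues $\mu_j$. Finally, you identify the limit of the eigenvector bifurcating from the double eigenvalue $0$ as a multiple of $(1,\dots,1,-(n-1))$, which is the paper's vector $(-\mathbf 1_{|B_P|},|B_P|)$ up to sign.

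What your route buys is self-containedness. It shows that the cited lemma, and with it Corollary~\ref{cor:path}, already follows from the machinery the paper develops. Your explicit formula $x_n=\delta x_{n-1}/(\delta-\lambda_j(\delta))$ for the new coordinate is also more direct than the paper's contradiction argument for $\mathbf x_b(\tau)\neq 0$.

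A few points to tidy up:
\begin{enumerate}
\item Theorem~\ref{thm:perturbation} alone only says that each eigenvalue of $L_\delta$ lies within $2\delta$ of \emph{some} eigenvalue of $L_0$. To conclude that there is exactly one eigenvalue near each $\mu_j$ and exactly two near $0$, use Weyl's inequality $|\lambda_k(L_\delta)-\lambda_k(L_0)|\le\|\delta E\|_2=2\delta$ for increasingly ordered eigenvalues.
\item The unit eigenvector for $\lambda(\delta)$ is only defined up to sign, so it does not literally converge. Either normalize the sign, as the paper does with $\mathbf x_b(\tau)\ge 0$, or note that every limit point is $\pm v$ with $v$ nowhere zero, which suffices by compactness.
\item The spectral projection and the first-order expansion (your value $n/(n-1)$ is right) are more than you need. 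Letting $\delta\to 0$ in $L_0x(\delta)=\lambda(\delta)x(\delta)-\delta E x(\delta)$ already puts every limit point in $\ker L_0$. Also, $\lambda(\delta)>0$ follows directly from connectivity of the path.
\item The closing ``cross-check'' via Proposition~\ref{pro:zero} merely restates what was proved and can be dropped.
\end{enumerate}
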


\begin{cor}\label{cor:path}
Let $P=(V_P,E_P)$ be a path with boundary $B_P$, whose endpoints lie in $B_P$. There exists a positive edge-weight function $w_P\in\mathbb{R}_{+}^{|E_P|}$ such that every Steklov eigenvalue of $(P,B_P,w_P)$ is simple, and the harmonic extension of every Steklov eigenfunction does not vanish on $V_P$.
\end{cor}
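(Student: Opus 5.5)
The plan is to reduce the Steklov problem on $P$ to the Laplacian problem on a shorter weighted path, apply Lemma~\ref{lem:Laplacepath} there, and then spread the weights back over $P$ so that interior values cannot vanish.

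\emph{Decomposition.} If $P$ is a single vertex the statement is trivial, so assume the two endpoints are distinct. List the boundary vertices in the order they occur along $P$: $b_1,\dots,b_m$ with $m\ge 2$. Here $b_1,b_m$ are the endpoints and consecutive $b_i,b_{i+1}$ are joined by a subpath $P_i$. Every internal vertex of $P_i$ lies in $B^c$ and has exactly two neighbours in $P$.

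\emph{Reduction to a shorter path.} Apply Proposition~\ref{pro:contracting} repeatedly to the internal vertices of each $P_i$. The hypothesis $\{u_1,u_2\}\notin E$ holds at every stage, because $P$ has no chords. This shows that $(P,B_P,w_P)$ has the same Steklov matrix as the weighted path $P'$ on vertex set $B_P$ with edges $\{b_i,b_{i+1}\}$. The weight of $\{b_i,b_{i+1}\}$ is the series conductance $w'_i=\bigl(\sum_{e\in P_i}1/w_P(e)\bigr)^{-1}$. Since $P'$ has no interior vertices, its Steklov matrix is exactly its Laplacian matrix.

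\emph{Choice of target weights.} By Lemma~\ref{lem:Laplacepath}, fix weights $w'$ on $P'$ for which all Laplacian eigenvalues are simple and every eigenvector is nowhere zero on $B_P$. Any $w_P$ whose series conductance on each $P_i$ equals $w'_i$ therefore gives simple Steklov eigenvalues, with every Steklov eigenfunction nonzero on $B_P$. Because the eigenvalues are simple, there are only finitely many eigenfunctions up to scaling, say $f_1,\dots,f_m$. These depend only on $w'$, not on how the resistance is distributed inside each $P_i$.

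\emph{Making the harmonic extension nonvanishing in the interior.} This is the main step, since a naive choice such as equal weights along $P_i$ could make $h_f$ vanish at an interior vertex. Let $v$ be an internal vertex of $P_i$. Write $R_1$ for the total resistance of the subpath from $b_i$ to $v$ and $R_2$ for that from $v$ to $b_{i+1}$, so $R_1+R_2=1/w'_i$ is fixed. Harmonicity at degree-two vertices makes $h_f$ affine in accumulated resistance along $P_i$, so
\[
h_f(v)=\frac{R_2 f(b_i)+R_1 f(b_{i+1})}{R_1+R_2}.
\]
Since $f(b_i)$ and $f(b_{i+1})$ are nonzero, $h_f(v)=0$ holds for at most one value of $R_1\in(0,1/w'_i)$, namely $R_1=\frac{f(b_i)}{w_i'\,(f(b_i)-f(b_{i+1}))}$. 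Thus each pair $(f_j,v)$ forbids at most one value of the partial sum $R_1(v)$, and the forbidden set is finite. We then choose positive resistances on the edges of $P_i$ summing to $1/w'_i$ whose partial sums avoid this finite set. This is possible: start from equal resistances and perturb each partial sum slightly within its open interval, keeping them strictly increasing and the total fixed. Setting $w_P(e)$ to be the reciprocal resistances yields the required weight function.
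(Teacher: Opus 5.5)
Your proof is correct and follows the paper's argument essentially step for step. You contract each boundary-to-boundary segment via Proposition~\ref{pro:contracting} to a weighted path on $B_P$, apply Lemma~\ref{lem:Laplacepath} there, and then distribute resistance along each segment so that the affine interpolation of $h_f$ avoids the finitely many forbidden partial resistance values. You also state explicitly that the eigenfunctions depend only on the series conductances, so the forbidden set is fixed before the internal resistances are chosen; the paper leaves this implicit.
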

\begin{proof}
Let $B_P=\{b_1,\ldots, b_m\}$ be the boundary of $P$, ordered along with the path.
Let $Q=(B_P,E_Q)$ be the path on $B_P$ with $E_Q=\{\{b_i,b_{i+1}\}:1\le i\le m-1\}$.
By Lemma \ref{lem:Laplacepath}, there exists a positive weight $\widetilde{w}$ on $Q$ such that  all Laplacian eigenvalue of the weighted graph $(Q,\widetilde{w})$ are simple and every Laplacian eigenfunction $f$ of $(Q,\widetilde{w})$ is nowhere zero on  $B_P$.
For each subpath $P_i$ between $b_i$ and $b_{i+1}$, write $P_i : x_0=b_i,x_1,\ldots,x_k=b_{i+1}$.
We use $r_j$ to denote the resistance between $x_{j-1}$ and $x_{j}$, i.e., $r_j=\frac{1}{w_P(x_{j-1},x_j)}$.
Choose positive edge weight function $w_P$ so that
\begin{align}\label{eq:condition}
\left(\sum_{j=1}^{k}r_j\right)^{-1}=\widetilde{w}(b_i,b_{i+1}).
\end{align}
By Proposition \ref{pro:contracting}, the Laplacian eigenvalues and eigenfunctions of $(Q,\widetilde{w})$ are exactly the Steklov eigenvalues and eigenfunctions of $(P,B_P,w_P)$.
It remains to ensure that the harmonic extensions $h$ of Steklov  eigenfunctions  $f$ have no zeros at the interior vertices of $P$.

Since $h$ is harmonic on $P_i$ with $h(x_0)=f(b_i)$ and $h(x_k)=f(b_{i+1})$, then for each $j$,
\begin{align*}
  h(x_j)=(1-\theta_j)f(b_i)+\theta_j f(b_{i+1}),\qquad 0 \leq \theta_j \leq 1.
\end{align*}
Here, $\theta_j$ depends only on the weights of $P_i$, in fact, $\theta_j=\frac{\sum_{t=1}^{j}r_t}{\sum_{t=1}^{k}r_t}$.
Hence,  $h(x_j)=0$ for at most one value of $\theta_j$.
Since there are only finitely many Steklov eigenfunctions $f$ up to scalar multiples and finitely many interior vertices, there are only finitely many such values to avoid.
We can choose the weight function $w_P$ on each $P_i$, still satisfying \eqref{eq:condition}, so that the corresponding $\theta_j$'s avoid all these values. Then no harmonic extension vanishes at an interior vertex.
This completes the proof.
\end{proof}

An edge of a graph is called a \emph{pendant edge} if it is incident with a vertex of degree one. Such a vertex is called a \emph{pendant vertex} (or a leaf). More generally, a \emph{pendant path} is a path whose internal vertices have degree two and whose one end vertex is a pendant vertex while the other end vertex is attached to the remaining part of the graph. In particular, a pendant edge can be viewed as a pendant path of length one. The following lemma describes how the Steklov matrix of a weighted graph changes when a pendant edge is attached to the graph.
It is obtained by a direct computation with the Schur complement in \eqref{eq:Shur}.

\begin{lem}\label{lem:pendantedge}
Let $(G,B,w)$ be a connected  weighted graph with boundary $B$, and write its Laplacian matrix in block form as $ L= \begin{pmatrix} L_{[B^c]} & L_{[B^c,B]}\\[2pt] L_{[B,B^c]} & L_{[B]} \end{pmatrix}$.
Let $(\widehat{G}, B\cup\{b\}, \widehat{w})$ be the weighted graph obtained from $G$ by attaching a pendant edge to a vertex $u\in V(G)$, where the new pendant vertex is $b$ and the weight of the new edge is $\tau$. Let $S$ and $\widehat{S}$ denote the Steklov matrices of $(G,B,w)$ and $(\widehat{G}, B\cup\{b\}, \widehat{w})$, respectively.
Then
\[ \widehat{S}= \begin{cases} \displaystyle \begin{pmatrix} S & \mathbf0\\ \mathbf 0^{\top} & 0 \end{pmatrix} +\tau \begin{pmatrix} \mathbf e_u\mathbf e_u^{\top} & -\mathbf e_u\\ -\mathbf e_u^{\top} & 1 \end{pmatrix}, & u\in B, \\[16pt] \displaystyle \begin{pmatrix} S & \mathbf 0\\ \mathbf 0^{\top} & 0 \end{pmatrix} +\frac{\tau}{1+\tau\alpha} \begin{pmatrix} \mathbf{p}_u\mathbf{p}_u^{\top} & -\mathbf{p}_u\\ -\mathbf{p}_u^{\top} & 1 \end{pmatrix}, & u\notin B, \end{cases} \]
where $\mathbf e_u$ is the standard basis vector indexed by the vertex $u$, of the appropriate dimension, and $\mathbf{p}_u=-L_{[B,B^c]}L_{[B^c]}^{-1}\mathbf e_u$ and where $ \alpha=\mathbf e_u^{\top}L_{[B^c]}^{-1}\mathbf e_u$.
\end{lem}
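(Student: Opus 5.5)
The plan is to compute $\widehat S$ directly as the Schur complement \eqref{eq:Shur} for $\widehat G$, splitting into the two cases according to whether the attachment vertex $u$ is a boundary or an interior vertex. Order the vertices of $\widehat G$ as $B^c$, then $B$, then $b$. Adding the edge $\{u,b\}$ with weight $\tau$ changes the Laplacian by $\tau(\mathbf e_u-\mathbf e_b)(\mathbf e_u-\mathbf e_b)^{\top}$, so
\[
\widehat L=\begin{pmatrix} L & \mathbf 0\\ \mathbf 0^{\top} & 0\end{pmatrix}+\tau(\mathbf e_u-\mathbf e_b)(\mathbf e_u-\mathbf e_b)^{\top}.
\]
The new boundary set is $B\cup\{b\}$, and the interior set $B^c$ is unchanged.

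\emph{Case $u\in B$.} The interior block $\widehat L_{[B^c]}=L_{[B^c]}$ is unchanged, as are the off-diagonal blocks $\widehat L_{[B^c,B\cup\{b\}]}=\bigl(L_{[B^c,B]}\ \ \mathbf 0\bigr)$, since $b$ has no interior neighbour. Hence the Schur complement correction term is the same as for $G$, padded with zeros. The boundary block is $\widehat L_{[B\cup\{b\}]}=\operatorname{diag}(L_{[B]},0)+\tau(\mathbf e_u-\mathbf e_b)(\mathbf e_u-\mathbf e_b)^{\top}$, and subtracting the padded correction gives the first formula immediately.

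\emph{Case $u\in B^c$.} Now the interior block changes to $\widehat L_{[B^c]}=L_{[B^c]}+\tau\mathbf e_u\mathbf e_u^{\top}$. The off-diagonal block becomes $\widehat L_{[B\cup\{b\},B^c]}=\begin{pmatrix} L_{[B,B^c]}\\ -\tau\mathbf e_u^{\top}\end{pmatrix}$, and the boundary block is $\operatorname{diag}(L_{[B]},\tau)$. First invert the interior block using Sherman--Morrison. Writing $K=L_{[B^c]}^{-1}$ (symmetric) and $\mathbf k=K\mathbf e_u$, so that $\alpha=\mathbf e_u^{\top}\mathbf k$, we get
\[
\widehat L_{[B^c]}^{-1}=K-\frac{\tau}{1+\tau\alpha}\mathbf k\mathbf k^{\top}.
\]
Here $1+\tau\alpha>0$ because $\alpha>0$, as $K$ is a positive matrix by the $M$-matrix argument in Proposition~\ref{pro:interior}, or simply because $K$ is positive definite.

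Next expand $\widehat S$ blockwise, using $\mathbf p_u=-L_{[B,B^c]}\mathbf k$. The three blocks are as follows.
\begin{itemize}
\item The $B\times B$ block is $L_{[B]}-L_{[B,B^c]}KL_{[B^c,B]}+\frac{\tau}{1+\tau\alpha}\mathbf p_u\mathbf p_u^{\top}=S+\frac{\tau}{1+\tau\alpha}\mathbf p_u\mathbf p_u^{\top}$.
\item The $B\times\{b\}$ block is $0-L_{[B,B^c]}\widehat L_{[B^c]}^{-1}(-\tau\mathbf e_u)=\tau L_{[B,B^c]}\bigl(\mathbf k-\frac{\tau\alpha}{1+\tau\alpha}\mathbf k\bigr)=\frac{\tau}{1+\tau\alpha}L_{[B,B^c]}\mathbf k=-\frac{\tau}{1+\tau\alpha}\mathbf p_u$.
\item The $(b,b)$ entry is $\tau-\tau^2\mathbf e_u^{\top}\widehat L_{[B^c]}^{-1}\mathbf e_u=\tau-\tau^2\bigl(\alpha-\frac{\tau\alpha^2}{1+\tau\alpha}\bigr)=\tau-\frac{\tau^2\alpha}{1+\tau\alpha}=\frac{\tau}{1+\tau\alpha}$.
\end{itemize}
By symmetry of $\widehat S$, the $\{b\}\times B$ block is the transpose of the second one, so these blocks assemble into the second formula.

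There is no real obstacle here; the main care point is the bookkeeping of signs. In particular, $L_{[B^c,B]}$ has nonpositive entries, so $\mathbf p_u=-L_{[B,B^c]}K\mathbf e_u$ is the (nonnegative) Poisson-kernel vector. The only genuine ingredient is the Sherman--Morrison step together with the simplification of the scalar $\tau-\tau^2\alpha/(1+\tau\alpha)$.
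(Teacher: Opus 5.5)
Your proof is correct and follows the paper's argument essentially verbatim: a direct Schur complement computation split into two cases. For $u\in B$, the interior block and the coupling are unchanged, so only the boundary block picks up $\tau(\mathbf e_u-\mathbf e_b)(\mathbf e_u-\mathbf e_b)^{\top}$; for $u\in B^c$, the rank-one update of $L_{[B^c]}$ is inverted by Sherman--Morrison and the blocks are then simplified, and your block computations (including the $(b,b)$ entry $\tau-\tau^2\alpha/(1+\tau\alpha)=\tau/(1+\tau\alpha)$) all check out.
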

\begin{proof}
When $u \in B$,  the Laplacian matrix $\widehat{L}$ of $(\widehat{G}, B\cup\{b\}, \widehat{w})$ takes the block form
\[
\widehat{L} = \begin{pmatrix}
L_{[B^c]} & L_{[B^c,B]} & \mathbf{0} \\[2pt]
L_{[B,B^c]} & L_{[B]} + \tau\mathbf{e}_u\mathbf{e}_u^{\top} & -\tau\mathbf{e}_u \\[2pt]
\mathbf{0}^{\top} & -\tau\mathbf{e}_u^{\top} & \tau
\end{pmatrix}.
\]
By the Schur complement formula \eqref{eq:Shur}, we have
\[
\widehat{S}= \begin{pmatrix}
L_{[B]}+\tau\mathbf{e}_u\mathbf{e}_u^{\top} & -\tau\mathbf{e}_u \\[2pt]
-\tau\mathbf{e}_u^{\top} & \tau
\end{pmatrix}
- \begin{pmatrix}
L_{[B,B^c]} \\[2pt] \mathbf{0}^{\top}
\end{pmatrix} L_{[B^c]}^{-1}
\begin{pmatrix}
L_{[B^c,B]} & \mathbf{0}
\end{pmatrix}.
\]
Note that the second term equals $\begin{pmatrix} L_{[B,B^c]}L_{[B^c]}^{-1}L_{[B^c,B]} & \mathbf{0} \\[2pt] \mathbf{0}^{\top} & 0 \end{pmatrix}$. Hence, we have
\begin{align*}
  \widehat{S} = \begin{pmatrix}
S  & 0 \\[4pt]
0 & 0
\end{pmatrix}+\tau\begin{pmatrix}
 \mathbf{e}_u\mathbf{e}_u^{\top} & -\mathbf{e}_u \\[4pt]
-\mathbf{e}_u^{\top} & 1
\end{pmatrix}.
\end{align*}

When  $u \in B^c$, we have
\[
\widehat{L} = \begin{pmatrix}
L_{[B^c]}+\tau\mathbf{e}_u\mathbf{e}_u^{\top} & L_{[B^c,B]} & -\tau\mathbf{e}_u \\[2pt]
L_{[B,B^c]} & L_{[B]} & \mathbf{0} \\[2pt]
-\tau\mathbf{e}_u^{\top} & \mathbf{0}^{\top} & \tau
\end{pmatrix}.
\]
By \eqref{eq:Shur},  we have
\begin{align*}
\widehat{S} = \begin{pmatrix}
L_{[B]} & \mathbf{0} \\[2pt] \mathbf{0}^{\top} & \tau
\end{pmatrix}
- \begin{pmatrix}
L_{[B,B^c]} \\[2pt] -\tau\mathbf{e}_u^{\top}
\end{pmatrix}
\bigl(L_{[B^c]}+\tau\mathbf{e}_u\mathbf{e}_u^{\top}\bigr)^{-1}
\begin{pmatrix}
L_{[B^c,B]} & -\tau\mathbf{e}_u
\end{pmatrix}.
\end{align*}
Using the Sherman–Morrison–Woodbury identity \cite[(0.7.4.2)]{Horn_Johnson_2012}, we have
\[
\bigl(L_{[B^c]}+\tau\mathbf{e}_u\mathbf{e}_u^{\top}\bigr)^{-1} = L_{[B^c]}^{-1} - \frac{\tau}{1+\tau\alpha}\,L_{[B^c]}^{-1}\mathbf{e}_u\mathbf{e}_u^{\top}L_{[B^c]}^{-1},e
\]
where $\alpha=\mathbf{e}_u^{\top}L_{[B^c]}^{-1}\mathbf{e}_u > 0$.
Set $\mathbf{p}_u = -L_{[B,B^c]}L_{[B^c]}^{-1}\mathbf{e}_u$ (here, $\mathbf{e}_u \in \mathbb{R}^{|B^c|}$), then a direct computation yields
\begin{align*}
  \widehat{S} &= \begin{pmatrix}
L_{[B]} & \mathbf{0} \\[2pt] \mathbf{0}^{\top} & \tau
\end{pmatrix}
- \begin{pmatrix}
L_{[B,B^c]}L_{[B^c]}^{-1}L_{[B^c,B]} - \dfrac{\tau}{1+\tau\alpha}\mathbf{p}_u\mathbf{p}_u^{\top} & \dfrac{\tau}{1+\tau\alpha}\mathbf{p}_u \\[6pt]
\dfrac{\tau}{1+\tau\alpha}\mathbf{p}_u^{\top} & \dfrac{\tau^2\alpha}{1+\tau\alpha}
\end{pmatrix} \notag \\
&=\begin{pmatrix}
S & \mathbf{0}  \\[4pt]
\mathbf{0}^{\top} & 0
\end{pmatrix} + \dfrac{\tau}{1+\tau\alpha}\begin{pmatrix}
\mathbf{p}_u\mathbf{p}_u^{\top} & -\mathbf{p}_u \\[4pt]
-\mathbf{p}_u^{\top} & 1
\end{pmatrix}.
\end{align*}
\end{proof}

\begin{remark}
In the setting of Lemma~\ref{lem:pendantedge}, the vector $\mathbf e_u \in \mathbb{R}^{|B|}$ can be identified with the Kronecker delta function  at the  vertex $u \in B$, that is $<\mathbf e_u, f>=\sum_{x}\delta_{ux}f(x)=f(u)$  for a function $f$ on $B.$ 
For  $u\in B^c$, the quantity $\alpha=\mathbf e_u^{\top}L_{[B^c]}^{-1}\mathbf e_u$ (here, $\mathbf e_u \in \mathbb{R}^{|B^c|}$) is the value of the discrete Dirichlet Green function at $(u,u)$, that is, $\alpha=\mathcal{G}_{B}(u,u)$. Moreover, the vector $\mathbf p_u=-L_{[B,B^c]}L_{[B^c]}^{-1}\mathbf e_u$ can be interpreted as the discrete Poisson kernel $\mathcal{P}_B(u,\cdot)$ associated with the interior point $u$ and the boundary $B$. Consequently,  the harmonic extension $h_f(u)=<\mathbf p_u,f>=\sum_{x\in B}\mathcal{P}_B(u,x)f(x)$.
\end{remark}

\section{Proof of main results}\label{sec:main}

\subsection{Proof idea and sketches of the proofs}
We briefly explain the main ideas in the proof of Theorem \ref{thm:generic}. 
The proof consists of two parts, corresponding to the generic simplicity of Steklov eigenvalues and the generic nonvanishing of Steklov eigenfunctions on the boundary.

Both parts follow the same strategy: we first characterize the exceptional sets by the vanishing of discriminants, which are rational functions of the edge weights, and then show that these discriminants are not identically zero. The desired genericity then follows from the zero-set theorem (Theorem \ref{thm:map}).

For the first part concerning the simplicity of eigenvalues, it suffices to construct a choice of edge weights for $(G,B,w)$ such that all Steklov eigenvalues are simple. We reduce this construction to a spanning tree of $G$ by assigning zero weights to all non-tree edges. This choice  guarantees that the discriminant is well-defined due to the connectedness of the spanning tree. We then construct the edge weights on this spanning tree inductively.
Starting from a weighted path with simple Steklov spectrum (by Corollary \ref{cor:path}), we add the remaining boundary vertices one by one. At each step, several new edges are introduced simultaneously.  By the perturbation theorem (Theorem \ref{thm:perturbation}), we show  that when the weights of these newly added edges are chosen sufficiently small, the eigenvalues remain simple.

The second part concerning the nonvanishing property of eigenfunctions follows from the same inductive construction. We again start with the weighted path provided by Corollary~\ref{cor:path}, which has both a simple Steklov spectrum and nowhere-vanishing harmonic extensions of all Steklov eigenfunctions. During the inductive process of adding boundary vertices one at a time, several new edges are introduced at each step. For all eigenvalues except the first nontrivial one, the sensitivity theorem for simple eigenvalues (Theorem~\ref{thm:sensi}) implies that the corresponding eigenvectors remain nonzero at every boundary vertex.
The only additional difficulty concerns the eigenvector corresponding to the first nontrivial eigenvalue, which bifurcates from the eigenspace associated with a multiple zero eigenvalue of the limiting block matrix. By analyzing the limiting eigenspace, we show that this eigenvector is also nonzero at every boundary vertex.

This completes the inductive proof.
\subsection{Proofs}

\noindent{\textbf{Part 1 of Theorem \ref{thm:generic}}}
All Steklov eigenvalues of $(G,B)$ are simple for  generic weights.

\begin{proof}
Since the case $|B|=1$ is trivial, in what follows, we assume $|B| \geq 2$.
For a fixed weight function $w\in \mathbb{R}_{+}^{|E|}$, let $\chi(\lambda,w)$ denote the characteristic polynomial of the Steklov matrix $S$ associated with $(G,B,w)$. For each $i\in B$, let $\chi_i(\lambda,w)$ denote the characteristic polynomial of the principal submatrix $S_{[B\setminus\{i\},\,B\setminus\{i\}]}$. Note that the determinant of a matrix is a homogeneous polynomial in its entries. It follows from \eqref{eq:adj} that the entries of $S$ are rational functions in the weight variables $w$.
Hence, the coefficients of $\chi(\lambda,w)$ and $\chi_i(\lambda,w)$ belong to the field of rational functions in the variables $w$.



Given two real polynomials $f,g\in \mathbb{R}[x]$ of positive degree, the resultant of $f$ and $g$, denoted by $\operatorname{Res}(f,g)$, is an integer polynomial in the indeterminates given by the coefficients of $f$ and $g$. Moreover,  $\operatorname{Res}(f,g)=0$ if and only if $f$ and $g$ have a common root in $\mathbb{C}$; see \cite[Page.~78]{usingalgebraic}. For a real polynomial $f$,  
$\operatorname{Res}(f,f')=0,$ if and only if $f$ has a multiple root.

Motivated by this property, we consider the following map:
\begin{align*}
\operatorname{Discr}_s:\mathbb{R}_{+}^{|E|}
&\to \mathbb{R}, \\
w
&\mapsto
\operatorname{Res}\left(\chi(\lambda,w),\sum_{i\in B}\chi_i(\lambda,w)\right).
\end{align*}
It is known that $\sum_{i\in B}\chi_i(\lambda,w)=\chi'(\lambda,w)$, where $\chi'(\lambda,w)$ denotes the derivative of $\chi(\lambda,w)$ with respect to $\lambda$ \cite[Eq.~(0.8.10.2)]{Horn_Johnson_2012}.
Hence, $\operatorname{Discr}_s(w)=0$ if and only if $(G,B,w)$ has a multiple Steklov eigenvalue.

Recall that $\operatorname{Discr}_s(w)$ is a rational function of the weight variables $w$. More precisely, it can be written as
\[
\operatorname{Discr}_s(w)=\frac{P_s(w)}{\det(L_w[B^c])^N}
\]
for some nonnegative integer $N$, where $P_s(w)$ is a polynomial. Since $G$ is connected and the edge weights are positive, we have
$\det(L_w[B^c])>0$ on $\mathbb{R}^{|E|}_+$. Hence, $\operatorname{Discr}_s(w)=0$ if and only if $P_s(w)=0$.
By Theorem~\ref{thm:map}, if  $\operatorname{Discr}_s$  is not identically zero on $\mathbb{R}_{+}^{|E|}$, then  $\operatorname{Discr}_s^{-1}(0)$ has Lebesgue measure zero in $\mathbb{R}_{+}^{|E|}$.
Therefore, to complete the proof, it remains to show that $\operatorname{Discr}_s$ is not identically zero.

Let $T=(V_T,E_T)$ be a spanning tree of $G$. We first choose a positive weight function $w_T\in\mathbb{R}_{+}^{|E_T|}$ on $T$, and then extend it to a nonnegative weight function $w^*\in\mathbb{R}^{|E|}$ on $G$ by setting
$$
w^*(e)=
\begin{cases}
w_T(e), & e\in E_T,\\
0, & e\in E\setminus E_T.
\end{cases}
$$
The rational function $\operatorname{Discr}_s$ is well-defined at $w^*$ due to the connectivity of $T$.
The weighted graph $(G,B,w^*)$ reduces to the weighted tree $(T,B,w_T)$, and then
\begin{align*}
 \operatorname{Discr}_s(w^*) &=\operatorname{Discr}_s^{T}(w_T)
\end{align*}
where the right-hand side denotes the corresponding discriminants for the weighted tree $(T,B,w_T)$. Therefore, it  suffices to find a positive edge weight function $w_T$ on $T$ for which the discriminant is nonzero.
Without loss of generality, we may assume that all leaves of $T$ lie in $B$, since removing a leaf not in $B$ does not change the Steklov operator.

Choose a nontrivial weighted path $(P,B_P,w_P)$ in $(T,B,w_T)$ such that its end-vertices are in $B$.
By Corollary \ref{cor:path}, we may choose the weights $w_P$ such that all Steklov eigenvalues of $(P,B_P,w_P)$ are simple.

Let $b\in B\setminus B_P$ be a boundary vertex that is connected by  an interior path in $T$  to some vertex in $B_P$.
Let $(T^{(1)},B^{(1)},w^{(1)})$ be the minimal weighted  subtree of $T$ containing $B_P\cup\{b\}$, so that $B^{(1)}=B_P\cup\{b\}$.
Observe that $T^{(1)}$ is obtained from $P$ by attaching a pendant path $P_{ub}$ from some vertex $u\in V(P)$ to $b$. Let  $(\widehat{T},B^{(1)},\widehat{w})$ be the  weighted tree obtained via contracting interior vertices on $P_{ub}$ of $(T^{(1)},B^{(1)},w^{(1)}),$ replacing $P_{ub}$ by a single edge $\{u,b\}$ with weight $\widehat{w}(u,b) =\left(\sum_{e \in E(P_{ub})}\frac{1}{w^{(1)}(e)}\right)^{-1}.$ Now we write $\tau:=\widehat{w}(u,b),$ regarded as the parameter in the following. Let $\widehat{S}=\widehat{S}(\tau)$ be the Steklov matrix of 
the  weighted tree $(\widehat{T},B^{(1)},\widehat{w}),$ which coincides with the Steklov matrix of $T^{(1)}$ by Proposition~\ref{pro:contracting}.
From Lemma \ref{lem:pendantedge}, we have
\begin{align}\label{eq:S}
  \widehat{S}= \begin{cases} \displaystyle \begin{pmatrix}
S_P  & 0 \\[4pt]
0 & 0
\end{pmatrix}+\tau\begin{pmatrix}
 \mathbf{e}_u\mathbf{e}_u^{\top} & -\mathbf{e}_u \\[4pt]
-\mathbf{e}_u^{\top} & 1
\end{pmatrix}, & u\in B, \\[16pt] \displaystyle \begin{pmatrix} S_P & \mathbf 0\\ \mathbf 0^{\top} & 0 \end{pmatrix} +\frac{\tau}{1+\tau\alpha} \begin{pmatrix} \mathbf{p}_u\mathbf{p}_u^{\top} & -\mathbf{p}_u\\ -\mathbf{p}_u^{\top} & 1 \end{pmatrix}, & u \in  B^c. \end{cases}
\end{align}

For $\tau>0,$ we order the eigenvalues of $\widehat{S}(\tau)$  as
\begin{equation}\label{eq:eigenorder}
    0=\hat{\lambda}_0(\tau) < \hat{\lambda}_1(\tau)  \leq \hat{\lambda}_2(\tau) \leq \cdots \leq \hat{\lambda}_{|B_P|}(\tau),
\end{equation} where $\hat{\lambda}_1(\tau)>0$ follows from the connectivity of the graph. By the perturbation theorem (Theorem~\ref{thm:perturbation}) (see also Kato's general perturbation theory~\cite[Theorems II.5.1 and II.5.2]{kato1995perturbation}), since
\[
\widehat{S}(\tau)\to
\begin{pmatrix}
S_P & \mathbf{0}\\[4pt]
\mathbf{0}^{\top}&0
\end{pmatrix}
=:S_0,
\qquad \tau\to0,
\]
the eigenvalue $\hat{\lambda}_i(\tau)$ extends to a continuous function on $[0,\infty),$ and it converges to the corresponding eigenvalues of the limit matrix $S_0$ as $\tau\to 0$.
Note that the eigenvalues of $S_0$ are 
$$0,\lambda_1(S_P)=0,\lambda_2(S_P),\cdots,\lambda_{|B_P|}(S_P).$$ Since $0<\lambda_2(S_P)<\cdots<\lambda_{|B_P|}(S_P),$ there exists $\delta>0$ such that
for $0<\tau<\delta,$ $$ \frac12\lambda_2(S_P)< \hat{\lambda}_2(\tau)  < \cdots < \hat{\lambda}_{|B_P|}(\tau).$$ Moreover, by $\lim_{\tau\to0^+}\hat{\lambda}_1(\tau)=0,$ there exists $\delta_1$ such that for $0<\tau<\delta_1,$ $\hat{\lambda}_1(\tau)<\frac12\lambda_2(S_P).$ Hence, for $0<\tau<\min\{\delta,\delta_1\},$
\[
0=\hat{\lambda}_0(\tau) < \hat{\lambda}_1(\tau)  < \hat{\lambda}_2(\tau) < \cdots < \hat{\lambda}_{|B_P|}(\tau).
\] This proves that $\widehat{S}(\tau)$ has simple eigenvalues.


Pull back the weight $\widehat{w}$ to $T^{(1)}$ by replacing the effective edge of weight $\tau$ with the pendant path $P_{ub}$, choosing positive edge resistances, i.e., reciprocal of the weight, on $P_{ub}$ whose sum is $1/\tau$. This yields a weight $w^{(1)}$ for which $(T^{(1)},B^{(1)},w^{(1)})$ has simple Steklov spectrum.

Choose another boundary vertex $b' \in B \setminus B^{(1)}$ that is connected to $T^{(1)}$ by an interior path.
Let $T^{(2)}$ be the minimal subtree of $T$ containing $B^{(1)} \cup \{b'\}$.
Repeating the same argument as above — attaching a pendant edge (after contracting interior vertices) with sufficiently small weight — we can find a weight $w^{(2)}$ such that $(T^{(2)}, B^{(2)}, w^{(2)})$ has simple Steklov spectrum.
Continuing this process inductively, after adding all remaining boundary vertices one by one, we eventually obtain a weighted tree $(T, B, w_T)$ whose Steklov eigenvalues are all simple.

This completes the proof.
\end{proof}

\noindent{\textbf{Part 2 of Theorem \ref{thm:generic}}}
All  Steklov eigenfunctions of $(G,B)$ are nowhere zero on $B$ for generic weights.

\begin{proof}
Recall that $\chi(\lambda,w)$ denotes the characteristic polynomial of the Steklov matrix and  $\chi_i(\lambda,w)$ denotes the characteristic polynomial of the principal submatrix $S_{[B\setminus\{i\},\,B\setminus\{i\}]}$ for $i\in B$. Consider the resultant
\begin{align*}
\operatorname{Discr}_0:\mathbb{R}_{+}^{|E|}
&\to \mathbb{R}, \\
w
&\mapsto
\operatorname{Res}\left(\chi(\lambda,w),\prod_{i\in B}\chi_i(\lambda,w)\right).
\end{align*}
Note that the Steklov matrix is real symmetric, hence is Hermitian. By Proposition \ref{pro:zero},  $\operatorname{Discr}_0(w)=0$ if and only if $(G,B,w)$ admits a Steklov eigenfunction that vanishes at some boundary vertex.
As in the proof of simplicity, it therefore suffices to construct a positive weight function $w_T$ on a spanning tree $T$ for which every Steklov eigenfunction is nowhere vanishing on $B$.

Applying Corollary \ref{cor:path} to the weighted path $(P,B_P,w_P)$ in $(T,B,w_T)$, we may choose a positive weight $w_P$ so that
$(P,B_P,w_P)$ has simple Steklov spectrum. In addition, we may require the harmonic extension of every Steklov eigenfunction to be nowhere vanishing.
Let $b\in B\setminus B_P$ be a boundary vertex that is connected by an interior path in $T$  to some vertex in $B_P$.
Recall that $(T^{(1)},B^{(1)},w^{(1)})$ is the minimal weighted subtree of $T$ containing $B_P\cup\{b\}$, and $(\widehat{T},B^{(1)},\widehat{w})$
is obtained by replacing $P_{ub}$ in $T^{(1)}$ by a single edge $\{u,b\}$ with weight $\tau$.
For sufficiently small $\tau$, the Steklov eigenvalues of $(\widehat{T},B^{(1)},\widehat{w})$ can be ordered as in \eqref{eq:eigenorder} with $\hat{\lambda}_i:=\hat{\lambda}_i(\tau),$ $0\leq i\leq |B_P|.$

We first consider the Steklov eigenfunctions associated with $\hat{\lambda}_i$ for $i=2,\ldots, |B_P|$.
From \eqref{eq:S},  each entry of the Steklov matrix $\widehat{S}=\widehat{S}(\tau)$
associated with $(\widehat{T},B^{(1)},\widehat{w})$ is an analytic function of the parameter $\tau$. In particular, when $\tau=0$, we have $\widehat{S}(0)=\begin{pmatrix}
S_P & \mathbf{0}  \\[4pt]
\mathbf{0}^{\top} & 0
\end{pmatrix}$.
Let $(\lambda,\mathbf{x}_P)$ be an eigenpair of $S_P$ with $\lambda \neq 0$.
Then $\lambda$ is a simple eigenvalue of $\widehat{S}(0)$, and the corresponding eigenvector  is  $\begin{pmatrix}
\mathbf{x}_P\\[2pt] 0
\end{pmatrix}$.
By sensitivity theorem (Theorem \ref{thm:sensi}), there exists real analytic functions $\lambda(\tau)$ and $\begin{pmatrix}
\mathbf{x}_P(\tau)\\[2pt] \mathbf{x}_b(\tau)
\end{pmatrix}$ defined in a neighborhood of $\tau=0$, such that $\lambda(\tau)$ and $\begin{pmatrix}
\mathbf{x}_P(\tau)\\[2pt] \mathbf{x}_b(\tau)
\end{pmatrix}$ are respectively an eigenvalue and an associated eigenvector of $\widehat{S}(\tau)$.
Moreover, we have $\lambda(0)=\lambda$, $\mathbf{x}_P(0)=\mathbf{x}_P$ and $\mathbf{x}_b(0)=0$.

Since every entry of $\mathbf{x}_P$ is nonzero and
$\mathbf{x}_P(\tau)$ depends analytically on $\tau$,
there exists $\delta>0$ such that every entry of
$\mathbf{x}_P(\tau)$ is nonzero for all
$0<\tau<\delta$.

We next show that $\mathbf{x}_b(\tau)\neq 0$ for all
$0<\tau<\delta$. Suppose, to the contrary, that $\mathbf{x}_b(\tau)=0$ for some $0<\tau<\delta$.
Then $\mathbf{x}(\tau)=
\begin{pmatrix}
\mathbf{x}_P(\tau)\\[2pt]
0
\end{pmatrix}
$
is an eigenvector of $\widehat S(\tau)$ with eigenvalue $\lambda(\tau)$.
First assume that $u\in B_P$. By the eigenvalue equation and
\eqref{eq:S}, we have
\begin{align}
S_P\mathbf{x}_P(\tau)
+\tau\mathbf e_u\mathbf e_u^{\top}
\mathbf{x}_P(\tau)
&=
\lambda(\tau)\mathbf{x}_P(\tau),
\label{eq:eigf}\\
\tau\mathbf e_u^{\top}\mathbf{x}_P(\tau)
&=0.
\label{eq:eigg}
\end{align}
Since $\tau>0$, \eqref{eq:eigg} implies
$\mathbf e_u^{\top}\mathbf{x}_P(\tau)=0$. Substituting this into
\eqref{eq:eigf}, we obtain
\[
S_P\mathbf{x}_P(\tau)
=
\lambda(\tau)\mathbf{x}_P(\tau).
\]
Thus we get a Steklov eigenfunction $f$ of $(P,B_P,w_P)$ such that $f(u)=\mathbf e_u^{\top}\mathbf{x}_P(\tau)=0$, contradicting the choice of $w_P$.

Now assume that $u\in V(P) \setminus B_P$. By the eigenvalue equation and
\eqref{eq:S}, we similarly obtain
\[
S_P\mathbf{x}_P(\tau)
=
\lambda(\tau)\mathbf{x}_P(\tau),
\qquad
\mathbf p_u^{\top}\mathbf{x}_P(\tau)=0.
\]
Thus we get a Steklov eigenfunction $f$ of $(P,B_P,w_P)$ such that
\[
h_f(u)
=
\sum_{z\in B_P}\mathcal P(u,z)f(z)
=
\mathbf p_u^{\top}\mathbf{x}_P(\tau)
=
0,
\]
where $\mathcal P(u,z)$ is the discrete Poisson kernel, and the harmonic
extension $h_f$ of $f$ vanishes at the interior vertex $u$. This also contradicts the choice of $w_P$.
Therefore, $\mathbf{x}_b(\tau)\neq 0$ for all $0<\tau<\delta$. For all sufficiently small $\tau>0$, we conclude that the Steklov eigenfunctions of $(\widehat T,B^{(1)},\widehat w)$ corresponding to $\hat\lambda_i$, $i=2,\ldots,|B_P|$, are nowhere vanishing on $B^{(1)}$.

By the proof in Part 1 in Theorem~\ref{thm:generic}, we know that for sufficiently small $\tau>0,$ $\hat\lambda_1(\tau)$ is simple. Let $\mathbf{x}(\tau)$ be the  Steklov eigenvector corresponding to $\hat{\lambda}_1(\tau)$, satisfying $\|\mathbf{x}(\tau)\|=1$ and $\mathbf{x}_b(\tau)\geq 0.$ Note that the stability theorem is not directly applicable at $\tau=0$, since the zero eigenvalue of $\widehat S(0)$ is not simple. 

We claim that \[\lim_{\tau\to 0^+}\mathbf{x}(\tau)=
c
\begin{pmatrix}
-\mathbf{1}_{|B_P|}\\[2pt]
|B_P|
\end{pmatrix}, 
\] where $c=\frac{1}{\sqrt{|B_P|\,(|B_P|+1)}}.$
It suffices to prove that the set of limit points of
$\mathbf{x}(\tau)$ as $\tau\to0^+$ consists of a single element. Let $\mathbf{x}^*$ be any limit point  of $\mathbf{x}(\tau)$ along a subsequence of $\tau\to0^+.$ By the eigenequation for $\lambda_1(\tau)$ and passing to the limit $\tau\to0^+,$ up to a subsequence, we know that
$$\widehat S(0) \mathbf{x}^*=0.$$
One easily sees that
\[
\ker \widehat S(0)
= \operatorname{span}\left\{
\begin{pmatrix}
\mathbf{1}_{|B_P|}\\[2pt] 0
\end{pmatrix},
\begin{pmatrix}
\mathbf{0}_{|B_P|}\\[2pt] 1
\end{pmatrix}
\right\}.
\]
Moreover, since for small $ \tau,$ $\mathbf{x}(\tau)\perp\mathbf{1}_{|B^{(1)}|}$  and $\mathbf{x}_b\geq 0,$  we have
$$\mathbf{x}^*\perp\mathbf{1}_{|B^{(1)}|}\quad \mathrm{and}\quad \mathbf{x}^*_b\geq 0.$$ Hence, by $\|\mathbf{x}^*\|=1,$
\[
\mathbf{x}^*=
\frac{1}{\sqrt{|B_P|\,(|B_P|+1)}}\begin{pmatrix}
-\mathbf{1}_{|B_P|}\\[2pt]
|B_P|
\end{pmatrix}.
\]
This proves the claim.  Consequently, there exists $\delta>0$ such that, for all $0<\tau<\delta$, the vector
$\mathbf{x}(\tau)$ has no zero entries.

In order to repeat the inductive construction, we need that the harmonic extension of every Steklov eigenfunction of $(T^{(1)},B^{(1)},w^{(1)})$ is nowhere vanishing on the interior vertices.
We first establish this property on the contracted configuration
$(\widehat T,B^{(1)},\widehat w)$ and then show that it is preserved under
the pullback of the weight to $T^{(1)}$.

On the contracted tree $\widehat T$, this follows from analytic
perturbation. Indeed, the value of the harmonic extension at each
interior vertex is a real analytic function of $\tau$. At
$\tau=0$, these values coincide with those of the harmonic
extensions on the weighted path $(P,B_P,w_P)$, which are nonzero by the
choice of $w_P$. Hence, after possibly decreasing $\delta>0$, all these
values remain nonzero for $0<\tau<\delta$.

It remains to pull back  to $T^{(1)}$ by replacing the effective edge of weight $\tau$ with the pendant path
$P_{ub}:x_0=u,x_1,\ldots,x_k=b$. As in the proof of Corollary \ref{cor:path}, choose positive edge resistances
$r_j=\frac{1}{w^{(1)}(x_{j-1},x_j)}$ for $j=1,\ldots,k$, 
satisfying $\sum_{j=1}^k r_j=\frac1{\tau}$. 
Equivalently, choose the intermediate resistance ratios
\[
\theta_j=
\frac{\sum_{t=1}^j r_t}
{\sum_{t=1}^k r_t}=\tau\sum_{t=1}^j r_t,
\qquad
j=1,\ldots,k-1,
\]
so that, for every Steklov eigenfunction $f$ of
$(T^{(1)},B^{(1)},w^{(1)})$, the affine interpolation
\[
h_f(x_j)
=
(1-\theta_j)h_f(u)+\theta_j h_f(b)
\]
is nonzero for every $j=1,\ldots,k-1$, where $h_f$ denotes the harmonic
extension of $f$. Since there are only finitely many eigenfunctions up to a scalar and
finitely many interior vertices on $P_{ub}$, only finitely many choices
of each $\theta_j$ are excluded. Therefore the $\theta_j$ can be chosen
to avoid all of them.
This completes the construction of $w^{(1)}$, for which the harmonic extension of every Steklov eigenfunction is not vanishing on $V(T^{(1)})$.

We now repeat the same inductive procedure as in the proof of simplicity. At each step, we contract the pendant path to be added into a single pendant edge and choose its weight sufficiently small. By the preceding argument, this preserves both the simplicity of the Steklov spectrum and the nonvanishing property of the harmonic extensions. Expanding the contracted edge back to the original pendant path does not introduce any new zeros, so the nonvanishing property is preserved on the enlarged subtree. After adding all boundary vertices, we obtain a positive weight $w_T$ in $T$ such that the harmonic extension of every Steklov eigenfunction of $(T,B,w_T)$ is nowhere vanishing in $V(T)$. 

This completes the proof.

\end{proof}

\section*{Acknowledgments}
Lixiang Chen was partially supported by the National Natural Science Foundation of China (No. 12501485). 
Bobo Hua was partially supported by the National Natural Science Foundation of China (No. 12371056). 
Yongtang Shi was partially supported by the National Natural Science Foundation of China (No. 12431013) and the Fundamental and Interdisciplinary Disciplines Breakthrough Plan of the Ministry of Education of China (JYB2025XDXM207).


\bibliographystyle{amsplain}
\bibliography{ref}
\end{document}